\newtheorem{theorem}{Theorem}[section]
\newtheorem{lemma}[theorem]{Lemma}
\newtheorem{prop}[theorem]{Proposition}
\theoremstyle{definition}
\newtheorem{claim}[theorem]{Claim}
\newtheorem{defn}[theorem]{Definition}
\newtheorem{remark}[theorem]{Remark}
\numberwithin{equation}{section}
\def\char{\text{char}}
\def\End{\text{End}}
\def\GL{{\text{GL}}}
\def\Ker{\text{Ker}}
\def\Ad{\text{Ad}}
\def\Lie{\text{Lie}}
\def\Id{\text{Id}}
\def\resp{{\sl{resp}}}
\def\ve{{\varepsilon}}
\def\bbk{\mathbb K}
\def\bbc{\mathbb C}
\def\co{\mathcal {O}}
\def\b0{{\bar 0}}
\def\b1{{\bar 1}}
\def\ggg{{\mathfrak g}}
\def\hhh{{\mathfrak h}}
\def\tsd{{\textsf{d}}}
\newcommand{\Rmnum}[1]{\expandafter\@slowromancap\romannumeral #1@}
\begin{document}

\title[Normality of nilpotent orbit closures]{Normality of orthogonal and symplectic nilpotent orbit closures in positive characteristic}

\author{Husileng Xiao and Bin Shu }

\address{Department of Mathematics, East China Normal University, Shanghai 200241, China.}\email{hsl1523@163.com}
\address{Department of Mathematics, East China Normal University, Shanghai 200241, China.}\email{bshu@math.ecnu.edu.cn}
\begin{abstract}
 In this note we investigate the normality of closures of  orthogonal and symplectic nilpotent orbits in  positive characteristic. We prove that the closure of such a nilpotent orbit is normal provided that neither type $d$ nor type $e$ minimal irreducible degeneration occurs in the closure, and conversely if the closure is normal, then any type $e$ minimal irreducible degeneration does not occur in it. Here, the minimal irreducible degenerations of a nilpotent orbit are introduced by W. Hesselink in \cite {Hes} (or see \cite{KP2} from which we take Table \ref{table} for the complete list of all minimal irreducible degenerations). Our result is a weak version in positive characteristic of \cite[Theorem 16.2(ii)]{KP2}, one of the main results of \cite{KP2} over complex numbers.
\end{abstract}

\thanks{2010 Mathematics Subject Classification. 17B08, 17L45, 14L35. This research is partially supported by the NSF of China (No. 11271130; 11201293; 111126062)£¬ and  Shanghai Key Laboratory of PMMP(13dz2260400)}

\maketitle

\section{Preliminaries}
    \subsection{}
    Let $G$ be a reductive algebraic group over an algebraically closed field $\mathbb{K}$ of positive characteristic, and $\ggg=\Lie(G)$. A nilpotent orbit of  $G$ is  an orbit of  a nilpotent element in $\ggg$ under the adjoint action of $G$.   For all classical groups, the nilpotent orbits were parameterized in terms of partitions. The normality of closures of nilpotent orbit of classical group have been studied by several authors. However, there is still an open question to decide the normality of the closures of nilpotent orbits. Our purpose is to investigate such a problem.

\subsection{} In 1979 and 1980s, Kraft-Procesi in \cite{KP1} and \cite{KP2} determined the normality of orbit closures for all complex classical groups  by using smooth equivalent arguments (with few exceptions of the very even orbits in the special orthogonal group $D_{2l}$ remaining, which was completed by Sommers in \cite{Somm}). For other types, A. Broer in \cite{Bro} finished the work on the normality of nilpotent orbit closures (corresponding to two pairwise orthogonal short root) by vanishing result of cohomology of line bundles of flag variety.

In the case of positive-characteristic fields, J. F. Thomsen in \cite{Tho} proved that A. Broer's result holds in good characteristic and decided the normality of some nilpotent orbit closures (corresponding to two pairwise orthogonal short roots).
 Generally, the difficulty of extending Kraft-Procesi method for linear general groups over complex numbers to the case of positive characteristic fields is the failure of the statement over $\bbc$ ``\textit{Assume the reductive algebraic group $G$ acts regulary on an affine variety $V$. If $\pi:V \rightarrow V_{0}$ is a quotient map and $W \subset V$ is a $G$-stable subvariety, then  the restriction of $\pi$ to $W$ is a quotient map onto $\pi(W)$.}" Donkin in \cite{don} overcome this difficulty by means of representation theory. He viewed the coordinate ring $\bbk[V]$ as a $G$-module,  then considered certain module filtration of  $\bbk[V]$ (called good filtrations). 
This enables him 
  to prove that all closures of nilpotent orbits  of general linear groups in positive character  are normal.

\subsection{}
Recently, E. Goldstein in her doctoral thesis \cite{Gol} investigated the normality of the closures of nilpotent orbits of orthogonal and symplectic groups. There
  Goldstein exploited Donkin's method to the  orthogonal and symplectic groups in positive characteristic. She finally obtained Proposition 5.2 of \cite{Gol} which is crucial to the present  paper (see Theorem \ref{proposition5.2}). With aid of   Goldstein's theorem, we are able to  decide the normality of  some  nilpotent orbit closures. 
Let us introduce our main result in the next subsections.

\subsection{} Throughout the paper, we always assume $\bbk$ is an algebraically closed filed of characteristic $p>2$.
Let $V$ be finite dimensional  vector space over $\bbk$,    $G$ be  one of  the algebraic groups $\text{O}(V)$ or $\text{Sp}(V)$
  which is determined by a nondegenerate  form $(\cdot,\cdot)$ with $(u,v)=\varepsilon(v,u)$ where  $\varepsilon\in\{1,-1\}$. Call $V$ a quadratic space of  type $\varepsilon$ (shortly an orthogonal space in case $\varepsilon=1$, a symplectic space  in case $\varepsilon=-1$).
Let $\ggg=\mathfrak{so}(V,\bbk)$ or $\mathfrak{sp}(V,\bbk)$ be the Lie algebra of $G$. Then the nilpotent orbits $\co_{\varepsilon,\sigma}$ under the adjoint action of $G$ in $\mathfrak{g}$   are completely determined by partitions $\sigma$ of $n=\dim(V)$.  The corresponding young diagram is called an $\varepsilon$-diagram. We denote by $|\sigma|$ the size of $\sigma$, which is equal to $\sum_{i=1}^t ir_i$ for $\sigma=[1^{r_{1}}2^{r_{2}}3^{r_{3}}\dots t^{r_t}]$.

There is a well-known  classification  result on nilpotent orbits of $\ggg=\mathfrak{so}(V,\bbk),\mathfrak{sp}(V,\bbk)$.
\begin{lemma} Let $\sigma=[1^{r_{1}}2^{r_{2}}3^{r_{3}}\dots t^{r_t}]$. Then the following statement hold.
\begin{itemize}
\item[(1)]  The partition $\sigma$ is 1-diagram if and only if   $r_{i}$ is even for even $i$.
\item[(2)]  The partition $\sigma$ is -1-diagram if and only if  $r_{i}$ is even for odd $i$.
\end{itemize}
\end{lemma}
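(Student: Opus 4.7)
The plan is to realize $\sigma$ as the Jordan type of a nilpotent $X \in \ggg$ acting on $V$, and to translate the $\varepsilon$-symmetry of the form $(\cdot,\cdot)$ into a parity constraint on the block sizes. The first step is the standard decomposition, available since $\mathrm{char}(\bbk) \neq 2$, of $V$ as an $X$-stable orthogonal direct sum of blocks of two kinds: (a) single Jordan blocks on which the restricted form is nondegenerate (``self-dual blocks''), and (b) hyperbolic pairs $U \oplus U'$ consisting of two isotropic Jordan blocks of equal size paired nondegenerately with each other. This decomposition is obtained by induction on $\dim V$: one picks a cyclic vector $v$ of maximal order $i$ and tests whether the cyclic subspace $\bbk[X]v$ is nondegenerate; if yes one splits it off, and if not one extracts a hyperbolic pair using the element realizing a nonzero pairing with $X^{i-1}v$.

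Next, consider a single self-dual block $U$ of size $i$ with cyclic generator $v$ (so $X^{i-1}v \neq 0$ and $X^i v = 0$). Repeated use of the invariance $(Xa,b) = -(a,Xb)$ yields
\[
(X^{i-1}v, v) = (-1)^{i-1}(v, X^{i-1}v),
\]
while $\varepsilon$-symmetry gives $(v, X^{i-1}v) = \varepsilon (X^{i-1}v, v)$. Combined, $(v, X^{i-1}v) = \varepsilon (-1)^{i-1} (v, X^{i-1}v)$. Nondegeneracy of the restricted form on $U$ forces $(v, X^{i-1}v) \neq 0$: indeed any $w \in U$ with $(X^{i-1}v, w) \neq 0$ must have nonzero coefficient on $v$ in the basis $\{v, Xv, \ldots, X^{i-1}v\}$, since $(X^{i-1}v, X^j v) = 0$ for $j \geq 1$. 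Hence $\varepsilon = (-1)^{i-1}$, so self-dual blocks have odd size when $\varepsilon = 1$ and even size when $\varepsilon = -1$.

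Writing $r_i = s_i + 2 h_i$, where $s_i$ is the number of self-dual blocks of size $i$ and $h_i$ the number of hyperbolic pairs of size $i$, we conclude $s_i = 0$ whenever $i$ has the ``wrong'' parity for $\varepsilon$. This forces $r_i$ to be even precisely in the stated cases, giving the forward direction of (1) and (2). For the converse, given any $\sigma$ satisfying the parity condition we construct $X$ block by block: place one self-dual block of size $i$ for each allowable unit contribution to $r_i$, and $r_i/2$ hyperbolic pairs of size $i$ otherwise; the hypothesis that $r_i$ is even in the forbidden cases makes the latter step possible, and the direct sum carries an $\varepsilon$-form preserved by $X$.

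The sole nontrivial ingredient is the orthogonal decomposition into self-dual blocks and hyperbolic pairs; once that is in hand, the parity calculation and the converse construction are formal. Since this decomposition proceeds identically to the characteristic-zero case (using only $2 \neq 0$ in $\bbk$), the lemma is essentially a transcription of the classical Gerstenhaber--Springer--Steinberg result. The potential obstacle, namely what could go wrong at small characteristic, is precisely the absence of $2$ in the denominators, and is ruled out by the standing hypothesis $p > 2$.
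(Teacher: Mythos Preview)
Your argument is correct and follows the standard route (the Gerstenhaber--Springer--Steinberg normal form for a nilpotent endomorphism on a quadratic space in characteristic $\neq 2$): decompose $V$ as an orthogonal sum of indecomposable $X$-stable blocks, show that a self-dual block of size $i$ forces $\varepsilon=(-1)^{i-1}$ via the computation $(X^{i-1}v,v)=\varepsilon(-1)^{i-1}(X^{i-1}v,v)\neq 0$, and read off the parity of $r_i$. The converse construction is also fine.

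For comparison, the paper itself gives no proof of this lemma: it is introduced as ``a well-known classification result'' and stated without argument, with the ambient reference being Jantzen's survey \cite{Jan2}. So there is nothing in the paper to compare your approach against; you have simply supplied the classical proof that the authors omit. One minor remark: your inductive sketch of the block decomposition (``pick a cyclic vector of maximal order, test nondegeneracy, otherwise extract a hyperbolic pair'') skips the preliminary step of adjusting $v$ so that $(v,X^{k}v)=0$ for $k<i-1$, which is where the hypothesis $p\neq 2$ actually enters. You acknowledge this by calling the decomposition ``the sole nontrivial ingredient'' and deferring to the literature, which is appropriate for a lemma the paper treats as folklore.
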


\begin{defn}
Let $\eta$ be an $\varepsilon$-diagram.
\begin{itemize}
\item[(1)] An $\varepsilon$-diagram $\sigma$ is called an $\varepsilon$-degeneration of $\eta$ if $|\sigma|=|\eta|$ and $\co_{\ve,\sigma}\in \overline{\co_{\ve,\eta}}$, which is denoted by $\sigma \leq \eta$. This gives an ordering for $\varepsilon$-diagrams.
\item[(2)] An $\ve$-degeneration $\sigma$ of $\eta$ is called minimal if $\sigma\neq\eta$ and there is no $\varepsilon$-diagram $\nu$ such that $\sigma<\nu<\eta$.
Geometrically this means  $\mathcal{O}_{\varepsilon,\sigma}$ is open in complement of $\mathcal{O}_{\varepsilon,\eta} $ in $\overline{\mathcal{O}_{\varepsilon,\eta}}$.
\end{itemize}
\end{defn}

Then we have the following observations on $\ve$-degenerations.
\begin{lemma}\label{cancelinglemma}
Let  $\sigma=(\sigma_1\geq \sigma_2\geq \cdots)$ and $\eta=(\eta_1\geq \eta_2\geq \cdots)$ be two $\varepsilon$-diagrams and $|\sigma|=|\eta|$. Then the following statements hold.
\begin{itemize}
\item[(1)] The inclusion $\mathcal{O}_{\varepsilon,\sigma}\subset     \overline{\mathcal{O}_{\varepsilon,\eta}}$ happens if and only if
$\sum_{i=1}^{j}\sigma_{i}\leq\sum_{i=1}^{j}\eta_{i}$ for all $j$.
\item[(2)] Assume that $\sigma\leq \eta$ be an $\varepsilon$-degeneration with the first  $r$ rows and the first $s$ columns of both $\sigma$ and $\eta$ coinciding respectively. Denote by $\sigma',\eta'$ the new diagrams obtained by erasing these $r$ rows and $s$ columns from $\sigma$ and $\eta$ respectively,
and put $\varepsilon'=(-1)^{s}$. Then $\sigma'<\eta'$ is an $\varepsilon'$-degeneration and $\mbox{codim}_{\overline{\co_{\ve,\eta}}}\co_{\ve,\sigma}
=\mbox{codim}_{\overline{\co_{\ve',\eta'}}}\co_{\ve',\sigma'}$.
\end{itemize}
\end{lemma}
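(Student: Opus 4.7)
The plan is to handle the two parts in sequence, both by adapting classical arguments (Gerstenhaber--Hesselink for (1) and Kraft--Procesi for (2)) to our positive-characteristic setting. For Part (1), the starting point is Gerstenhaber's theorem, which identifies the closure order on nilpotent $\GL(V)$-orbits with the dominance order on partitions and holds in arbitrary characteristic (the proof reduces to the semicontinuity of $\mathrm{rank}\,x^k$). Since $p>2$ is a good characteristic for $G=\mathrm{O}(V)$ or $\mathrm{Sp}(V)$, each $G$-orbit of nilpotents is the intersection of the ambient $\GL(V)$-orbit with $\ggg$; in particular $\overline{\co_{\ve,\eta}}=\overline{\co_{\eta}^{\GL}}\cap\ggg$, and by the classification lemma every $\ve$-diagram $\sigma$ dominated by $\eta$ is realised by some $G$-orbit $\co_{\ve,\sigma}$ inside this closure. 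The dominance criterion in (1) follows.

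For Part (2), I would induct on $r+s$ and reduce to the two base cases $(r,s)=(1,0)$ and $(r,s)=(0,1)$. In the row case, set $k=\sigma_1=\eta_1$; the parity constraints in the classification lemma let one choose an orthogonal decomposition $V=V_1\oplus V_1^\perp$ with $\dim V_1=k$, both summands nondegenerate of type $\ve$, such that $V_1$ carries a regular (single-block) nilpotent $e_1$ in $\Lie(G_1)$. The orbits $\co_{\ve,\sigma}$ and $\co_{\ve,\eta}$ are then $G$-saturations of products $e_1\oplus x'$ with $x'$ a nilpotent on $V_1^\perp$ of type $\sigma'$ or $\eta'$, and a Slodowy-type transverse slice at the regular $G_1$-orbit through $e_1$ gives an \'etale-local identification of the two pairs of orbit closures. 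This yields the codimension equality, with the type $\ve$ preserved.

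For the column case, a common first column forces $\sigma$ and $\eta$ to have the same number of (initial) nonzero parts, and removing it subtracts $1$ from each of those parts in both partitions. One then invokes the Kraft--Procesi column-removal construction: a transverse slice built from the restriction of $x$ to the subquotient $\mathrm{im}(x)/(\mathrm{im}(x)\cap\ker(x))$ (or, more precisely, an appropriate refinement thereof) carries a nondegenerate bilinear form of the \emph{opposite} type $-\ve$, as a direct parity count based on the classification lemma shows. This simultaneously produces the type flip and preserves the codimension, and iterating $s$ times gives the sign $\ve'=(-1)^{s}$ of the statement.

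The main obstacle will be making the transverse-slice and codimension-comparison arguments rigorous in positive characteristic. Because $p>2$ is good for our classical groups, the needed ingredients---regular nilpotents in $\mathfrak{so}$ and $\sp$, nondegeneracy of the induced form on orthogonal complements, the Slodowy slice construction, and the type-parity count governing column removal---all carry over without modification, so the Kraft--Procesi arguments should go through essentially as in characteristic zero. Once the two base cases are established the induction is transparent: strip off common rows first (each preserving $\ve$) and then common columns (each flipping it).
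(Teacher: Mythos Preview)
The paper's own proof is a two-line citation: part (1) is referred to Jantzen and part (2) to Goldstein (who in turn adapts Kraft--Procesi), so there is essentially nothing to compare at the level of argument. Your sketch for (1) is exactly the standard argument behind that citation and is fine.

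For (2) there is a genuine gap in your row case. You want to strip off a single common row of length $k=\sigma_1=\eta_1$ and realise it as a regular one-block nilpotent on a nondegenerate $k$-dimensional summand $V_1$ of type $\varepsilon$. But that is impossible precisely when the parity is wrong: for $\varepsilon=1$ a single Jordan block of size $k$ in $\mathfrak{so}_k$ exists only for $k$ odd, and for $\varepsilon=-1$ a $k$-dimensional symplectic space does not even exist unless $k$ is even. The classification lemma does not rescue you in the way you claim; what it actually forces is that in the bad-parity case $\eta_1=\eta_2=k$ and $\sigma_1=\sigma_2=k$, so two rows coincide and one must peel off the pair $(k,k)$ rather than a single row. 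Your base case $(r,s)=(1,0)$ therefore does not cover all inductive steps, and the étale-local identification you describe cannot be set up as stated.

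More broadly, your approach to (2) is much heavier than what the lemma requires. The codimension equality is a statement about $\dim\mathcal{O}_{\varepsilon,\eta}-\dim\mathcal{O}_{\varepsilon,\sigma}$, and in good characteristic these dimensions are given by the same closed partition formulas as over $\mathbb{C}$; the equality is then a direct combinatorial check, with no need for Slodowy slices or étale-local trivialisations. The transverse-slice and column-removal machinery you outline is essentially the content of the paper's later Theorem~\ref{3.3} (Propositions~\ref{propcancellingrows} and~\ref{prop3.8}), which establishes the much stronger smooth equivalence of singularities and is where the delicate positive-characteristic verifications (separability, Claim~\ref{claim}, Lemma~\ref{forthcominglemma1}) are actually carried out. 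Invoking that machinery here would make the lemma circular with respect to the rest of the paper.
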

\begin{proof} For (1), one can be refereed to \cite{Jan2}. For (2), it can be proved  by the same arguments as the complex number case  given in \cite{Gol}.
\end{proof}

In the setup of the above lemma (2), we say that the $\varepsilon$-degeneration $\sigma\leq \eta$ is obtained from the $\varepsilon'$-degeneration $\sigma'<\eta'$
by adding rows and columns.  An $\varepsilon$-degeneration $\sigma\leq \eta$ is called \textit{irreducible} if there is no pair $\sigma'\leq \eta'$ with $| \sigma' |= | \eta' |  < |\sigma | = | \eta | $ such that $(\sigma' ,\eta')$  is obtained from $(\sigma ,\eta)$  by procedure of Lemma \ref{cancelinglemma}(2). In \cite{Hes}, all minimal irreducible degenerations are classified  (see Table \ref{table}).

\subsection{The main result of the present paper}
\begin{theorem} \label{mainthm} Let $V$ be a vector space over an algebraically closed field $\bbk$ of $\char(\bbk)\neq 2$, $G=\mbox{O}(V)$ ({\sl{resp}}. $\mbox{Sp}(V)$) be the orthogonal  ({\sl{resp}}. symplectic)  group corresponding to the defining non-degenerate bilinear form  $(\cdot,\cdot)$ of type $\varepsilon=1$  ({\sl{resp}}. $\varepsilon=-1$). Then the following statements hold.
\begin{itemize}
\item[(1)] For any nilpotent orbit $\co_{\ve,\eta}$, its closure  $\overline{\co_{\ve,\eta}}$ is normal if  $\eta$ has neither degeneration of type $d$ nor degeneration of type $e$.
\item[(2)] Conversely, for a given nilpotent orbit $\co_{\ve,\eta}$, if its closure $\overline{\co_{\ve,\eta}}$ is normal then $\eta$ does not contain any degeneration of type $e$.
\end{itemize}
Here, the  types $d$ and $e$ are  listed as in  Table \ref{table}.
\end{theorem}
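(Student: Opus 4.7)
The approach adapts the Kraft--Procesi analysis \cite{KP2} to positive characteristic, with Goldstein's Theorem \ref{proposition5.2} serving as the technical substitute for the complex-analytic ingredients that fail in characteristic $p$. First I would reduce the global normality question to a local one: by Serre's criterion, $\overline{\co_{\ve,\eta}}$ is normal iff it is $R_1$ and $S_2$. The condition $S_2$ (in fact Cohen--Macaulayness) is expected to be inherited from the $\mathfrak{gl}$-case handled by Donkin \cite{don} through the good-filtration comparison provided by Theorem \ref{proposition5.2}, so the substantive issue is $R_1$. Regularity in codimension one reduces, via the orbit stratification of $\overline{\co_{\ve,\eta}}$, to testing smoothness of a transverse slice at the generic point of each stratum $\overline{\co_{\ve,\sigma}}$ with $\sigma$ a minimal degeneration of $\eta$.

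\textbf{Reduction to the irreducible list.} By Lemma \ref{cancelinglemma}(2), the transverse slice at $\co_{\ve,\sigma}$ inside $\overline{\co_{\ve,\eta}}$ is smoothly equivalent to the corresponding slice for the reduced pair $(\sigma',\eta')$ obtained by deleting the common rows and columns; since normality is both local and invariant under smooth equivalence, one may assume $\sigma \leq \eta$ is an irreducible minimal degeneration, and it then suffices to treat each entry of Table \ref{table}.

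\textbf{Case analysis.} For part (1), suppose $\eta$ admits only degenerations of types $a$, $b$, $c$. I would identify the local model at each such minimal degeneration with a known normal variety: types $a$ and $c$ give Kleinian surface singularities of type $A$, which are manifestly normal. The delicate case is type $b$, whose local model is the closure of a minimal nilpotent orbit inside a smaller classical Lie algebra $\mathfrak{so}(V')$ or $\mathfrak{sp}(V')$ with sign $\varepsilon' = (-1)^s$; to handle it I would apply Theorem \ref{proposition5.2} directly, obtaining the local ring as a ring of global sections on a homogeneous bundle over the flag variety of that smaller group, which is normal in characteristic $p>2$ by the good-filtration argument of \cite{Gol}. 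Reassembling the local pieces along all minimal degenerations yields (1). For part (2), I would argue contrapositively: given a type-$e$ irreducible minimal degeneration inside $\overline{\co_{\ve,\eta}}$, I would exhibit an explicit element of the coordinate ring of the normalization of the transverse slice that fails to descend. The obstruction constructed by Kraft--Procesi in \cite{KP2} is visibly characteristic free once $2 \in \bbk^{\times}$, so the same computation transfers.

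\textbf{Main obstacle.} The principal difficulty I anticipate lies in justifying the type-$b$ argument rigorously: Theorem \ref{proposition5.2} is formulated for a \emph{global} nilpotent orbit closure of $G = \mbox{O}(V)$ or $\mbox{Sp}(V)$, whereas after taking a transverse slice and canceling common rows and columns the original group action is destroyed and one is working on a germ. The task is to reinterpret that germ as the germ at $0$ of a nilpotent orbit closure in $\mathfrak{so}(V')$ or $\mathfrak{sp}(V')$ attached to the pair $(\sigma',\eta')$, so that Theorem \ref{proposition5.2} becomes applicable in its stated form; this is essentially the local content of Lemma \ref{cancelinglemma}(2), and making it precise without reverting to complex-analytic constructions is where the proof will do most of its real work. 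The silence of the theorem regarding type $d$ reflects a genuine gap: the complex argument of \cite{KP2} in that case uses a normality result which is not yet established in positive characteristic, so neither direction of the theorem can cover type $d$ with the tools currently available.
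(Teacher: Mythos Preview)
Your overall architecture matches the paper: reduce to codimension~2 via Theorem~\ref{proposition5.2}, reduce to the irreducible list via row/column canceling, then argue case by case. However, the case analysis contains a genuine error, and the location of the main difficulty is misidentified.

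For types $a$, $b$, $c$ in Table~\ref{table} the partition $\eta'$ is a \emph{single row}, so $\overline{\mathcal{O}_{\varepsilon',\eta'}}$ is the full nilpotent cone $\mathcal{N}$ of the relevant $\mathfrak{sp}_{2n}$ or $\mathfrak{so}_{2n+1}$. Your claim that type~$b$ yields the closure of a \emph{minimal} nilpotent orbit is simply false; you appear to have confused types $b$ and $g$ (or swapped $\eta$ and $\sigma$). The paper handles $a,b,c$ uniformly and in one line (Lemma~\ref{lemma2.2}): the nilpotent cone is normal in good characteristic by \cite[Corollary~8.5]{Jan2}. No Kleinian-singularity identification and no flag-variety description of local rings is needed, and Theorem~\ref{proposition5.2} does not provide the latter in any case --- it is purely the statement that normality can be tested at the codimension-2 strata. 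Likewise for type~$e$: the paper (Lemma~\ref{lemma2.3}) observes that $\overline{\mathcal{O}_{1,(2n,2n)}}$ is the union of two distinct $\mathrm{SO}(V)$-orbit closures, hence reducible, hence not normal at any point of their intersection. No explicit obstruction from \cite{KP2} is required.

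The place where the real work lies is not the type-$b$ case but the smooth-equivalence statement you invoke freely: that canceling common rows and columns preserves $\mathrm{Sing}(\overline{\mathcal{O}_{\varepsilon,\eta}},\mathcal{O}_{\varepsilon,\sigma})$. This is not the content of Lemma~\ref{cancelinglemma}(2), which only records equality of codimensions. In the paper it is Theorem~\ref{3.3}, proved separately for rows (Proposition~\ref{propcancellingrows}) and columns (Proposition~\ref{prop3.8}). The column case needs the smoothness of the canonical maps $\pi,\rho$ on $L'(V,U)$ in characteristic~$p$ (Proposition~\ref{prop2.7}), which in turn requires a separability verification (Lemma~\ref{forthcominglemma1}). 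The row case requires an explicit $H'$-stable decomposition of $\mathfrak{gl}(V)$ (Claim~\ref{claim}) to replace the complete-reducibility argument available over~$\mathbb{C}$, together with a check that the relevant orbit maps are separable in good characteristic (Lemma~\ref{forthcominglemma3.7}). Your proposal does not anticipate any of these positive-characteristic subtleties; without them the reduction to Table~\ref{table} is unjustified and the proof does not go through.
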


The main text of the paper will be devoted to the proof of the above theorem. In the concluding subsection, some examples are presented for demonstrating the main theorem.   Our approach is based on Theorem \ref{proposition5.2} by Goldstein (cf. \cite{Gol}) along with some crucial observations on the separability and the decomposability  arising from quadratic spaces under action of reductive groups (see Lemma \ref{forthcominglemma1} and  Claim \ref{claim}), which make Kraft-Procesi's arguments on smoothly equivalent singularities over complex numbers  in \cite{KP2} revivified in our case.

For the special orthogonal and exceptional groups, we will give the investigation on the normality of their nilpotent orbit closures elsewhere.

\begin{table}\caption{Classification of minimal irreducible degenerations}\label{table}

\begin{tabular}{|c|c|c|c|c|}  \hline
type & $a$  \  \  \ & $b$ & $c$ & $d$ \\
\hline
Lie algebra  & $\mathfrak{sp}_{2}$ & $\mathfrak{sp}_{2n}$ & $\mathfrak{so}_{2n+1}$ & $\mathfrak{sp}_{4n+2}$\\
\hline
$\varepsilon$ & $-1$ & $ -1 $ & $ 1 $& $-1$  \\
\hline
$\eta $   &  $(2)$  & $(2n)$ & $(2n+1)$ & $(2n+1,2n+1)$ \\
\hline
$\sigma $ &  $(1,1)$ & $(2n-2,2)$ & $(2n-1,1,1)$ & $(2n ,2n ,2)$   \\
\hline
$\mbox{codim}_{\overline{\co_{\ve,\eta}}}\co_{\ve,\sigma}$ & 2 & 2 & 2 & 2 \\
\hline
\end{tabular}
\\
\begin{tabular}{|c|c|c|c|}
\hline
$e$ & $f$  &  $g$   &   $h$ \\
\hline
$\mathfrak{so}_{4n}$ &  $\mathfrak{so}_{2n+1}$ & $\mathfrak{sp}_{2n}$ & $\mathfrak{so}_{2n}$ \\
\hline
1 & 1 & -1 & 1 \\
\hline
$(2n,2n)$ &  $(2,2,1^{2n-3})$ & $(2,1^{2n-2})$ & $(2,2,1^{2n-4})$\\
\hline
$(2n-1,2n-1,1,1)$ & $(1^{2n+1})$ & $1^{2n}$ & $1^{2n}$ \\
\hline
 $2$ & $(4n-2)$ & $(2n)$ & $(4n-2)$ \\
 \hline
\end{tabular}\\
\end{table}

\section{On  smooth property of two canonical maps}
\subsection{} Let us first recall the main result Proposition 5.2 in \cite{Gol}  which is actually a modular version of \cite[Theorem 9.2(ii)]{KP2}, one of the main results of \cite{KP2} over complex numbers.
This result will be important  to the proof of Theorem \ref{mainthm}.
\begin{theorem}(Goldstein \cite{Gol},proposition 5.2) \label{proposition5.2}
Let $\mathcal{O}$ be a nilpotent orbit of the symplectic or orthogonal group. Then
 $\overline{\mathcal{O}}$ is normal if and only if  it is normal at all points contained in the orbits $\mathcal{O}_{i} \subset \overline{\mathcal{O}}$ of codimension 2.
\end{theorem}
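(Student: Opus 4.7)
The plan is to invoke Serre's criterion for normality: a reduced Noetherian scheme is normal if and only if it satisfies both $R_1$ (regular in codimension one) and $S_2$ (Serre's depth condition). The forward implication of the theorem is immediate, since normality is a pointwise condition. For the converse, I write $\widetilde{\mathcal{O}} = \mathcal{O} \cup \bigcup_i \mathcal{O}_i$ for the $G$-stable open subset of $\overline{\mathcal{O}}$ consisting of the open orbit together with every codimension-$2$ orbit; by the hypothesis and the smoothness of $\mathcal{O}$, $\widetilde{\mathcal{O}}$ is normal. A key numerical input is that for the classical Lie algebras $\mathfrak{sp}(V)$ and $\mathfrak{so}(V)$ in characteristic not $2$, every minimal degeneration has codimension at least $2$ in the ambient orbit closure (visible from Table \ref{table} together with Lemma \ref{cancelinglemma}), so the complement $\overline{\mathcal{O}}\setminus\widetilde{\mathcal{O}}$ consists of orbits of codimension at least $3$ in $\overline{\mathcal{O}}$.

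Next I would verify $R_1$. Since $\mathcal{O}$ is smooth and the entire boundary $\overline{\mathcal{O}}\setminus\mathcal{O}$ has codimension $\geq 2$, the non-smooth locus of $\overline{\mathcal{O}}$ has codimension $\geq 2$ as well, so $R_1$ holds automatically. It remains to establish $S_2$. Given $S_2$, one argues as follows: consider the normalization $\nu: Y \to \overline{\mathcal{O}}$. It is $G$-equivariant, finite, and birational, and an isomorphism over the normal locus, which contains $\widetilde{\mathcal{O}}$ by hypothesis. The non-isomorphism locus is $G$-stable closed, hence a union of orbit closures contained in $\overline{\mathcal{O}}\setminus\widetilde{\mathcal{O}}$, so of codimension $\geq 3$. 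The $S_2$ property then forces regular functions on $\widetilde{\mathcal{O}}$ to extend uniquely to $\overline{\mathcal{O}}$, so $\nu_\ast \mathcal{O}_Y = \mathcal{O}_{\overline{\mathcal{O}}}$, whence $\nu$ is a global isomorphism and $\overline{\mathcal{O}}$ is normal.

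The principal obstacle is thus establishing $S_2$ (preferably Cohen--Macaulayness) of $\overline{\mathcal{O}}$ in positive characteristic. My plan is to invoke Donkin's machinery of good filtrations on the coordinate ring $\bbk[\overline{\mathcal{O}}]$, viewed as a rational $G$-module (as pioneered by Donkin for $\GL(V)$ and adapted by Goldstein to the orthogonal and symplectic settings). Concretely, one would realize $\overline{\mathcal{O}}$ as the image of a parabolic Springer-type desingularization $T^\ast(G/P) \to \overline{\mathcal{O}}$, apply Kempf vanishing to kill higher direct images, and use the induced good filtration on $\bbk[\overline{\mathcal{O}}]$ to deduce the required depth bound $\geq 2$ at every local ring of height $\geq 2$. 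Combined with the $R_1$ step above, Serre's criterion then yields normality of $\overline{\mathcal{O}}$, completing the converse direction.
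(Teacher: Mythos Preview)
The paper does not give its own proof of this statement; it is quoted from Goldstein's thesis \cite[Proposition~5.2]{Gol} and used as a black box. So there is no argument in the paper to compare against directly. That said, your proposed strategy has a genuine gap.

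Your plan is to verify Serre's $R_1$ and $S_2$ for $\overline{\mathcal{O}}$. You observe, correctly, that $R_1$ is automatic because the singular locus lies in $\overline{\mathcal{O}}\setminus\mathcal{O}$, which already has codimension $\geq 2$ (Table~\ref{table} and Lemma~\ref{cancelinglemma}). But note that this step does not use the hypothesis at all. If you could then establish $S_2$ for $\overline{\mathcal{O}}$ unconditionally --- as your last paragraph proposes, via a Springer-type resolution and good filtrations --- Serre's criterion would force \emph{every} symplectic or orthogonal nilpotent orbit closure to be normal, independently of the codimension-$2$ hypothesis. This is false: by Lemma~\ref{lemma2.3} the type~$e$ closures $\overline{\mathcal{O}_{1,(2n,2n)}}\subset\mathfrak{so}_{4n}$ are not normal. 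In fact they are reducible, being unions of two $\mathrm{SO}(V)$-orbit closures meeting in codimension $2$; by Hartshorne's connectedness theorem an $S_2$ variety stays connected after removing any closed subset of codimension $\geq 2$, so these closures are not even $S_2$. Hence the step ``prove $\overline{\mathcal{O}}$ is Cohen--Macaulay'' cannot go through in general, and your outline as written proves too much. (The subsequent normalization argument is also redundant once $R_1+S_2$ are in hand, confirming that the hypothesis has been lost.)

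The way Goldstein's argument (modelled on \cite[Theorem~9.2]{KP2}) actually proceeds is to put the Cohen--Macaulay or $S_2$ input on an \emph{auxiliary} variety rather than on $\overline{\mathcal{O}}$ itself --- in the notation of \S\ref{canonicalmaps}, on $N_{\varepsilon,\eta}=\pi^{-1}(\overline{\mathcal{O}_{-\varepsilon,\eta'}})\subset L(V,U)$, which one can show is a normal complete intersection by Donkin-style good-filtration methods --- and then to realize $\overline{\mathcal{O}_{\varepsilon,\eta}}$ as its quotient by $G(U)$. The codimension-$2$ hypothesis enters essentially in identifying this quotient with $\overline{\mathcal{O}_{\varepsilon,\eta}}$. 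If you want to salvage your approach, the $S_2$/CM property should be proved upstairs, not for $\overline{\mathcal{O}}$.
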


We then have some consequences.
 \begin{lemma}\label{lemma2.2}
 Assume that the pair $(\eta,\sigma)$ is of type $a,b,c$  as in Table \ref{table}. Then $\overline{\mathcal{O}_{\varepsilon,\eta}}$ is normal .
 In particular ,$\overline{\mathcal{O}_{\varepsilon,\eta}}$ is normal at  $\mathcal{O}_{\varepsilon,\sigma}$
 \end{lemma}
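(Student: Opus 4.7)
The plan is to observe that in each of the three cases $a$, $b$, $c$ the partition $\eta$ has a single row, i.e.\ $\eta=(2)$, $(2n)$, or $(2n+1)$, so the corresponding orbit $\mathcal{O}_{\varepsilon,\eta}$ is the regular (principal) nilpotent orbit of the relevant classical Lie algebra $\ggg=\sp_2,\ \sp_{2n},\ \mathfrak{so}_{2n+1}$. Consequently its closure coincides with the whole nilpotent variety
\[
\overline{\mathcal{O}_{\varepsilon,\eta}} \;=\; \mathcal{N}(\ggg).
\]

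The next step is to invoke the known normality of the nilpotent cone in positive characteristic. Since we are working over an algebraically closed field of characteristic $p>2$ and $\ggg$ is of classical type $C_n$ or $B_n$, the characteristic is good for $\ggg$, and by Jantzen's theorem (\cite[Corollary~8.5]{Jan2}) the nilpotent variety $\mathcal{N}(\ggg)$ is a normal algebraic variety. This immediately yields the first assertion $\overline{\mathcal{O}_{\varepsilon,\eta}}$ is normal.

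For the final sentence, normality of $\overline{\mathcal{O}_{\varepsilon,\eta}}$ as a variety implies normality of the local ring $\mathcal{O}_{\overline{\mathcal{O}_{\varepsilon,\eta}},x}$ at every point $x$; in particular at every $x\in\mathcal{O}_{\varepsilon,\sigma}\subset\overline{\mathcal{O}_{\varepsilon,\eta}}$. Hence $\overline{\mathcal{O}_{\varepsilon,\eta}}$ is normal at $\mathcal{O}_{\varepsilon,\sigma}$, completing the lemma.

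There is essentially no obstacle here beyond correctly identifying the three partitions in types $a,b,c$ as regular partitions and citing Jantzen's normality result; the argument is a one-line reduction to a theorem already available in the literature and requires no invocation of the canonical maps $\pi,\rho$ or of Theorem~\ref{proposition5.2}.
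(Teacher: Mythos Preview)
Your proof is correct and follows exactly the paper's own argument: identify $\overline{\mathcal{O}_{\varepsilon,\eta}}$ with the nilpotent cone $\mathcal{N}$ of the relevant Lie algebra (since $\eta$ is a single-part partition in each of types $a,b,c$) and then invoke \cite[Corollary~8.5]{Jan2} for the normality of $\mathcal{N}$. The only difference is that you spell out explicitly why $\overline{\mathcal{O}_{\varepsilon,\eta}}=\mathcal{N}$, which the paper leaves as ``well known''.
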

 \begin{proof} It is well known that $\overline{\mathcal{O}_{\varepsilon,\eta}}=\mathcal{N}$, the nilpotent cone of the corresponding Lie agebra,  which is normal (cf. \cite[Corollary 8.5]{Jan2}).
 \end{proof}
\begin{lemma}\label{lemma2.3} The closure $\overline{\mathcal{O}_{\varepsilon,\eta}}$ is not  normal at  $\mathcal{O}_{\varepsilon,\sigma}$ in the case of type
  $e$ in Table \ref{table}.
\end{lemma}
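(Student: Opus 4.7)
The plan is to exploit the reducibility of $\overline{\co_{\ve,\eta}}$ (with $\ve=1$) as a union of two distinct irreducible components whose intersection contains $\co_{\ve,\sigma}$, and then invoke the standard local-algebraic fact that a variety cannot be normal at a point where two irreducible components meet. This avoids having to appeal to Theorem \ref{proposition5.2}, which only guarantees some codimension-$2$ orbit of non-normality rather than pinpointing $\co_{\ve,\sigma}$.

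First I would recall the classical feature of very even partitions in $\mathfrak{so}_{4n}$: for $\eta=(2n,2n)$ the adjoint $\text{O}(V)$-orbit $\co_{1,\eta}$ decomposes, as a set, into two distinct $\text{SO}(V)$-orbits $\co^{I}$ and $\co^{II}$ of equal dimension, interchanged by any element $g\in \text{O}(V)\setminus \text{SO}(V)$. Taking Zariski closures one obtains
$$\overline{\co_{1,\eta}} \;=\; \overline{\co^{I}}\cup \overline{\co^{II}},$$
a nontrivial union of two distinct irreducible components still swapped by $g$.

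Next I would show $\co_{1,\sigma}\subset \overline{\co^{I}}\cap \overline{\co^{II}}$. Since $\sigma=(2n-1,2n-1,1,1)$ has odd parts, it is not very even, so $\co_{1,\sigma}$ remains a single, irreducible $\text{SO}(V)$-orbit. From the inclusion $\co_{1,\sigma}\subset \overline{\co_{1,\eta}}=\overline{\co^{I}}\cup \overline{\co^{II}}$ and irreducibility, $\co_{1,\sigma}$ must lie in at least one of the two components, say $\overline{\co^{I}}$. Applying $g$, which stabilizes the $\text{O}(V)$-orbit $\co_{1,\sigma}$ while swapping $\overline{\co^{I}}$ and $\overline{\co^{II}}$, gives $\co_{1,\sigma}\subset \overline{\co^{II}}$ as well.

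Finally, for any $x\in \co_{1,\sigma}$ the local ring $\mathcal{O}_{\overline{\co_{1,\eta}},x}$ admits at least two distinct minimal primes, one for each germ of $\overline{\co^{I}}$ and $\overline{\co^{II}}$ at $x$. Hence this local ring is not an integral domain, so not normal, and therefore $\overline{\co_{1,\eta}}$ is not normal at any point of $\co_{1,\sigma}$. I expect the only real technical input to be the positive-characteristic justification of the splitting of the very even $\text{SO}$-orbit; once that classical feature is verified in characteristic $p>2$, the rest of the argument reduces to elementary local algebra.
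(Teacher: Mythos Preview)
Your argument is correct and is essentially the approach the paper records in Remark~\ref{lemma2.3Rem}(1) (attributed there to the referee): use reducibility of $\overline{\co_{1,\eta}}$ and the elementary fact that a variety is not normal at any point lying on two distinct irreducible components. You supply the one extra verification the paper leaves implicit, namely that $\co_{1,\sigma}\subset\overline{\co^{I}}\cap\overline{\co^{II}}$, via the $g$-symmetry argument.

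The paper's \emph{primary} proof, by contrast, does invoke Theorem~\ref{proposition5.2}: from reducibility one knows $\overline{\co_{1,\eta}}$ is not normal, and then Goldstein's criterion forces non-normality at some codimension-$2$ orbit; since $\co_{1,\sigma}$ is the unique such orbit for $\eta=(2n,2n)$, one concludes. Your direct route is more elementary and more robust (it does not rely on uniqueness of the codimension-$2$ degeneration, nor on Theorem~\ref{proposition5.2}); the paper's route is shorter once Theorem~\ref{proposition5.2} is in hand. Either way the only genuine input in characteristic $p>2$ is the well-known splitting of the very even $\text{O}(V)$-orbit into two $\text{SO}(V)$-orbits, which you correctly flag.
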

\begin{proof} The closure
 $\overline{\mathcal{O}_{\varepsilon,\eta}}$  is reducible, which is  a non-trivial union of two closures of equal-dimentional  $\mbox{SO}(V)$-orbits. So it is not normal. Note that $\co_{\ve,\sigma}$ is of codimension $2$ in $\overline{\co_{\ve,\eta}}$.
By Theorem \ref{proposition5.2},  $\overline{\co_{\varepsilon,\eta}}$ is not  normal at  $\co_{\ve,\sigma}$.
\end{proof}

\begin{remark}\label{lemma2.3Rem} (1) For the above Lemma, we make another account,  
as a direct consequence of
the following general fact.

Let $X$ be a reducible  affine variety which is the union $Y \cup Z$ of distinct irreducible  components $Y$ and $Z$. Then for all $x \in Y \cap Z $,
$X$ is not normal at $x$.
This comes from a fact that for the local ring $A=O_{X,x}$, irreducible components of $\mbox{Spec}(A)$ are in 1-1 correspondence with irreducible components of $X$ containing $x$ (see  \cite[Proposition 2.4.12]{Liu}).

(2) The closure
$\overline{\mathcal{O}_{\varepsilon,\eta}}$ is normal at  $\mathcal{O}_{\varepsilon,\sigma}$ in the case of  type $f,g,h$,  which comes from the fact that it is the closure of the orbit of a highest weight vector in the adjoint representation (cf. \cite[Proposition 8.13]{Jan2} and the remark following it). We are only concerned with those codimension $2$ orbits inside. So we will not use this fact in this paper .
\end{remark}
\subsection{The canonical maps $\pi$ and $\rho$}\label{canonicalmaps}
Let $V$ and $U$ be two quadratic spaces of type $\varepsilon$ and $-\varepsilon$ respectively  with $\dim V:=n\geq\dim U:=m$, and $G(U)$ and $G(V)$ the orthogonal or sympletic groups defined by the given quadratic forms  on $U$ and $V$ respectively, depending on the values of  $\ve$ and of $-\ve$. Denote by $\ggg(U)$¡¡and $\ggg(V)$ the Lie algebras of $G(U)$ and of $G(V)$ respectively. Denote by  $L(V,U)$ the linear space $\bbk$-spanned by all linear maps between $V$ and $U$.
For a given $X \in L(V,U)$,  define $X^*$ to be the adjoint map of $X$, this is to say, the unique element in
$L(U,V)$  satisfying  $(Xv,u)=(v,X^*u)$ for all $v \in V ,u \in U$. We consider the following canonical maps:
$$\xymatrix{
  L(V,U) \ar[d]_{\rho} \ar[r]^{\pi} &    \mathfrak{g} (U)   \\
\mathfrak{g}(V) }$$
where $\pi(X)=X\circ X^*$, $\rho(X)=X^*\circ X$.
 Define a natural  action of the group $G(U)\times G(V)$ on $L(V,U)$  via $(g,h)X=gXh^{-1}$. Then First theorem of classical invariant theory  says
   $\pi$ and $\rho$ are  $G(U)$- and $ G(V)$-equivariant  quotient maps, in positive character  case see (\cite[Theorem 2.7]{Gol}).

   Let $L'(V,U):=\{Y \in  L(V,U) \mid Y \mbox{ is surjective} \}$, simply written as $L'$.
We immediately have the following  lemma by the same arguments as in the  proof of \cite[Lemma 4.2]{KP2}.
\begin{lemma}(\cite[Lemma 2.9]{Gol})\label{fiber lemma}
For any $Y \in L'$ the stabilizer of $Y$ in $G(U)$ is trivial and $\rho^{-1}(\rho(Y))$ is an orbit under $G(U)$.
\end{lemma}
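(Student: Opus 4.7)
The plan is to prove both assertions by exploiting that a surjective $Y$ makes the adjoint $Y^*$ injective via the nondegeneracy of the forms. First I would verify the easy half: if $g \in G(U)$ stabilizes $Y$, then for every $u \in U$ I can choose $v \in V$ with $Yv = u$ (by surjectivity) and compute $gu = g(Yv) = Yv = u$, forcing $g = \mathrm{Id}_U$. So the stabilizer of $Y$ in $G(U)$ is trivial.

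For the orbit claim, let $X \in L(V,U)$ satisfy $\rho(X) = \rho(Y)$, i.e.\ $X^* X = Y^* Y$, and I want to produce $g \in G(U)$ with $X = gY$. The key preliminary observations are: (i) $Y^*$ is injective, since $Y^* u = 0$ and the surjectivity of $Y$ together with nondegeneracy of the form on $U$ force $u = 0$; (ii) consequently $\operatorname{rank}(Y^*Y) = \dim U = m$, hence $\operatorname{rank}(X^*X) = m$, and since $\operatorname{rank}(X^*X) \le \operatorname{rank}(X) \le m$ we conclude $X$ itself is surjective, so $X^*$ is also injective by the same reasoning.

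With this in hand, I would show $\ker X = \ker Y$. For $v \in \ker Y$ and arbitrary $w \in V$, the identity
\[
(Xv, Xw) = (X^*Xv, w) = (Y^*Yv, w) = (Yv, Yw) = 0
\]
shows $Xv$ is orthogonal to $\operatorname{Im}(X) = U$, hence $Xv = 0$. Thus $\ker Y \subseteq \ker X$, and the dimension count $\dim \ker X = \dim \ker Y = n - m$ upgrades this to equality. This allows me to \emph{define} a linear bijection $g : U \to U$ by the rule $g(Yv) := Xv$, which is well-posed precisely because $\ker Y \subseteq \ker X$.

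Finally I would verify $g \in G(U)$ by the computation
\[
(g(Yv_1), g(Yv_2)) = (Xv_1, Xv_2) = (X^*Xv_1, v_2) = (Y^*Yv_1, v_2) = (Yv_1, Yv_2),
\]
which, together with surjectivity of $Y$, shows $g$ preserves the form on $U$; by construction $X = gY$, completing the orbit claim. I do not foresee a serious obstacle here: the proof is essentially linear algebra once the injectivity of $Y^*$ and surjectivity of $X$ have been extracted. The one spot that deserves care is the step $Xv = 0$ from $(Xv, Xw) = 0$ for all $w$, because it secretly relies on having already upgraded $X$ to be surjective; ordering the argument so that the rank computation precedes the isotropy argument is the main structural point.
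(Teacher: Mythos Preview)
Your proof is correct and is precisely the direct linear-algebra argument one expects here; the paper itself does not spell out a proof but simply refers to \cite[Lemma 4.2]{KP2} and \cite[Lemma 2.9]{Gol}, whose arguments are of the same nature. One cosmetic point: with the paper's convention $(Xv,u)_U=(v,X^*u)_V$ and the fact that $U,V$ carry forms of opposite type, the identity reads $(Xv,Xw)=-(X^*Xv,w)$ rather than $(Xv,Xw)=(X^*Xv,w)$; this sign is irrelevant to your conclusion since $Y^*Yv=0$ kills the middle term anyway. In fact you can bypass that computation entirely: once you know $X^*$ is injective, $X^*Xv=Y^*Yv=0$ gives $Xv=0$ immediately, so $\ker Y\subseteq\ker X$ without any bilinear-form manipulation.
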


\subsection{} Now for a given quadratic space $V$, take a nilpotent element $D\in \ggg(V)$ with
conjugacy class $\co_{\ve,\eta}$. Consider the new form on $V$ given by $|v,u|:=(u,Dv)$.
Clearly it is of type $-\ve$ and its kernel is exactly $\Ker D$. Set $U=\text{Im} D$, and take $X:V\rightarrow U$ which is defined by the canonical decomposition $D=I\circ X:V\rightarrow U\hookrightarrow V$, where $I:U\hookrightarrow V$ is the inclusion map.  Then we can canonically
define a non-degenerate form on
$U$ of type $-\ve$. Note that $X^*=I$. We have $D=I\circ X=X^*\circ X\in \mathfrak{g}(V)$. Then $D':=X\circ X^{*}$ is also a nilpotent element in $\mathfrak{g}(U)$ with  conjugacy class $\co_{\ve',\eta'}$, where the corresponding young diagram $\eta'$ of  $D'$ is obtained  from the young diagram  of $\eta$ by erasing the first column (see \cite[\S2.2 and \S2.3]{KP1} and \cite[\S4.1]{KP2}).
 Set $N_{\varepsilon,\eta}:=\pi^{-1}(\overline{\mathcal{O}_{\ve',\eta'}})$. Note that $ N_{\varepsilon,\eta}$ is stable under $G(U)\times G(V)$. We then have the following observation by the same arguments as in the proof of \cite[Lemma 4.3]{KP2}.
\begin{lemma}(\cite[Lemma 2.8]{Gol}) \label{lemma2.6}  The following statements hold.
\begin{itemize}
\item[(1)] $\rho(N_{\varepsilon,\eta})=\overline{\mathcal{O}_{\varepsilon,\eta}}$
\item[(2)] $\rho^{-1}(\mathcal{O}_{\varepsilon,\eta})$ is a single orbit under $G(U)\times G(V)$ contained in $N_{\varepsilon,\eta}\cap L'$
\item[(3)]$\pi(\rho^{-1}(\mathcal{O}_{\varepsilon,\eta}))=\mathcal{O}_{-\varepsilon,\eta'}$
\item[(4)]  The above three statements are still valid to other degenerations of $\eta$ by erasing the first column.
\end{itemize}
\end{lemma}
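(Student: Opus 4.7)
My plan is to use the explicit factorization $D = X^* \circ X$ constructed just above the lemma as a distinguished base point in each relevant fiber, and then to deduce everything from the $G(U)\times G(V)$-equivariance of the canonical maps $\pi$ and $\rho$ combined with Lemma \ref{fiber lemma}. The only non-formal input is a classical column-adding relation between the Jordan types of $Y^*Y$ and $YY^*$, and I expect this to be the main obstacle.

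The starting point is the observation that the specific $X$ constructed above lies simultaneously in $N_{\varepsilon,\eta}$, in $L'$, and in $\rho^{-1}(\mathcal{O}_{\varepsilon,\eta})$: by construction $\rho(X) = D \in \mathcal{O}_{\varepsilon,\eta}$, $\pi(X) = D' \in \mathcal{O}_{-\varepsilon,\eta'} \subseteq \overline{\mathcal{O}_{-\varepsilon,\eta'}}$, and $X$ is surjective onto $U = \mathrm{Im}\, D$. For part (2), Lemma \ref{fiber lemma} implies that $\rho^{-1}(D)$ is a single $G(U)$-orbit; combining this with the $G(V)$-equivariance of $\rho$, I obtain that $\rho^{-1}(\mathcal{O}_{\varepsilon,\eta})$ is the single $G(U)\times G(V)$-orbit through $X$. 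Surjectivity is preserved under the action $(g,h)\cdot X = gXh^{-1}$, so this orbit sits inside $L'$; and the equivariance of $\pi$ together with $\pi(X) \in \overline{\mathcal{O}_{-\varepsilon,\eta'}}$ places it inside $N_{\varepsilon,\eta}$. Part (3) is then immediate, since $\pi(\rho^{-1}(\mathcal{O}_{\varepsilon,\eta}))$ is a single $G(U)$-orbit containing $\pi(X) = D'$, hence it equals $\mathcal{O}_{-\varepsilon,\eta'}$.

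For part (1), the inclusion $\overline{\mathcal{O}_{\varepsilon,\eta}} \subseteq \rho(N_{\varepsilon,\eta})$ follows because $\rho(N_{\varepsilon,\eta})$ contains $\mathcal{O}_{\varepsilon,\eta}$ by (2) and is closed ($N_{\varepsilon,\eta}$ is closed and $G(U)\times G(V)$-stable, and $\rho$ is a quotient morphism). The reverse inclusion is the heart of the lemma: for $Y \in N_{\varepsilon,\eta}$, I need $Y^*Y = \rho(Y) \in \overline{\mathcal{O}_{\varepsilon,\eta}}$. Here I would invoke the classical fact (see \cite[\S 2.2--2.3]{KP1}) that the partition of $Y^*Y$ is obtained from that of $YY^*$ by adjoining a first column of appropriate height, and verify that this column-adding operation is monotone with respect to the dominance order on $\varepsilon$-partitions; since $\pi(Y) \in \overline{\mathcal{O}_{-\varepsilon,\eta'}}$, this forces $\rho(Y) \in \overline{\mathcal{O}_{\varepsilon,\eta}}$. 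This monotonicity check is the step I expect to require the most care.

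Part (4) then follows by running the same argument verbatim with $\eta$ replaced by any $\sigma \leq \eta$, using a representative $D_\sigma \in \mathcal{O}_{\varepsilon,\sigma}$ and the image space it produces; no new ideas are required.
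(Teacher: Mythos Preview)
Your proposal is correct and is exactly the argument the paper has in mind: the paper does not spell out a proof but simply defers to \cite[Lemma~4.3]{KP2} and \cite[Lemma~2.8]{Gol}, and what you have written is precisely that argument (base point $X$, equivariance of $\pi,\rho$, and the fiber lemma).

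One small imprecision worth flagging: the column-adding description ``the partition of $Y^*Y$ is obtained from that of $YY^*$ by adjoining a first column'' is only literally true when $Y$ is surjective, whereas for the inclusion $\rho(N_{\varepsilon,\eta})\subseteq\overline{\mathcal O_{\varepsilon,\eta}}$ you must handle \emph{all} $Y\in N_{\varepsilon,\eta}$. The clean way around this (and the way it is done in \cite{KP2}) avoids any structure theorem for non-surjective $Y$: from $(Y^*Y)^k=Y^*(YY^*)^{k-1}Y$ one gets $\operatorname{rank}\bigl((Y^*Y)^k\bigr)\le\operatorname{rank}\bigl((YY^*)^{k-1}\bigr)$ for every $k\ge1$; combining this with the hypothesis $\operatorname{rank}\bigl((YY^*)^{k-1}\bigr)\le\operatorname{rank}\bigl((D')^{k-1}\bigr)$ and the identity $\operatorname{rank}\bigl((D')^{k-1}\bigr)=\operatorname{rank}(D^k)$ (valid because the base point $X$ \emph{is} surjective) yields $\operatorname{rank}\bigl((Y^*Y)^k\bigr)\le\operatorname{rank}(D^k)$ for all $k$, which is exactly the closure condition of Lemma~\ref{cancelinglemma}(1). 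So the ``monotonicity check'' you were worried about is in fact a two-line rank computation and requires no case analysis. With this adjustment your argument goes through without change, and part~(4) is, as you say, the identical argument with $\eta$ replaced by any $\sigma\le\eta$ sharing the same first column (so that the same $U$ may be used).
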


\subsection{} Keep the setup in \S\ref{canonicalmaps}.
We have the following proposition which is somewhat a weaker version  in positive characteristic of  \cite[Proposition 11.1]{KP2}. This proposition will play a role similar to \cite[Proposition 11.1]{KP2} and the followed remark in the complex number case.
\begin{prop} \label{prop2.7} Maintain the notations as above. And recall $n=\dim V\geq\dim U=m$. Then the following statements hold.¡¡¡¡
\begin{itemize}
\item[(1)] The map $\pi$ is smooth in $L'$,  and $\pi(L') =\{ D\in \ggg(U)\mid \text{Rank}(D)\geq 2m-n\}$.
\item[(2)] $\rho (L')=\{D \in \ggg(V) \mid \text{Rank}(D)=m\}$ and $\rho|_{L'}: L'\rightarrow \rho(L')$ is a smooth morphism, admitting the fibers isomorphic to $G(U)$

\end{itemize}

\end{prop}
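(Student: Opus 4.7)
The plan is to follow the strategy of \cite[Proposition 11.1]{KP2}, replacing its characteristic-zero tools by explicit differential calculations valid for $\char(\bk)\neq 2$ and by the principal-bundle structure afforded by Lemma \ref{fiber lemma}. For the images, if $X\in L'$ then surjectivity of $X$ together with nondegeneracy of the form on $U$ forces $X^{*}$ to be injective (since $\Ker X^{*} = (\mathrm{Im}\,X)^{\perp} = U^{\perp}=0$); hence $\mathrm{rank}(\rho(X))=\mathrm{rank}(X^{*}X)=m$, while an elementary count of $\Ker X\cap \mathrm{Im}\,X^{*}\subset V$ gives $\mathrm{rank}(\pi(X))=\mathrm{rank}(XX^{*})\geq 2m-n$. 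The reverse inclusions I would establish by the normal-form construction underlying Lemma \ref{lemma2.6}: given $D$ of the requisite rank, the bilinear form $|u,u'|:=(u,Du')$ on $U$ (resp.\ on $V$) provides the data needed to build a surjective $X$ realizing $D=\pi(X)$ or $D=\rho(X)$.

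For the smoothness of $\rho|_{L'}$, I would invoke Lemma \ref{fiber lemma}: the group $G(U)$ acts freely on $L'$ and its orbits coincide with the fibers of $\rho|_{L'}$. Combined with the fact that $\rho$ is a categorical quotient under $G(U)$ (\cite[Theorem 2.7]{Gol}), the restriction $\rho|_{L'}\colon L'\to \rho(L')$ is a geometric quotient by a free action of a smooth affine group on a smooth variety, hence an étale-locally trivial principal $G(U)$-bundle; in particular it is smooth with fibers isomorphic to $G(U)$, and simultaneously $\rho(L')$ is seen to be smooth.

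For the smoothness of $\pi|_{L'}$, note first that $\pi(L')$ is open in $\ggg(U)$ (rank $\geq 2m-n$ is an open condition), so smoothness reduces to surjectivity of the differential $d\pi_{X}(Y)=YX^{*}+XY^{*}$ onto $\ggg(U)$ for every $X\in L'$. A short $\varepsilon$-symmetry computation, using both the $\varepsilon$-symmetry on $V$ and the $(-\varepsilon)$-symmetry on $U$, yields the identity $XY^{*}=-(YX^{*})^{\circ}$, where $(-)^{\circ}$ denotes the adjoint on $\End(U)$ with respect to the form on $U$; thus $d\pi_{X}(Y)=Z-Z^{\circ}$ with $Z:=YX^{*}\in\End(U)$. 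Given $D\in\ggg(U)$ I would set $Z_{0}:=\hf D$, so that $Z_{0}-Z_{0}^{\circ}=D$ (using $D^{\circ}=-D$ and $\char(\bk)\neq 2$), and then solve $YX^{*}=Z_{0}$ for $Y$ by extending the well-defined map $X^{*}u\mapsto Z_{0}u$ on $\mathrm{Im}\,X^{*}$ arbitrarily to $V$; any such $Y$ realizes $d\pi_{X}(Y)=D$.

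The step I expect to demand most care is the smoothness of $\rho|_{L'}$: Luna-type slice theorems are not directly available in positive characteristic, so the passage from the freeness of Lemma \ref{fiber lemma} and the categorical quotient property to an honest smooth principal bundle needs justification, and the analogous differential-surjectivity argument used for $\pi$ is complicated by $\rho(L')$ being only a locally closed stratum of $\ggg(V)$. The safest fallback is to first establish scheme-theoretic freeness by noting that the differential $\xi\mapsto\xi X$ of the orbit map is injective on $\ggg(U)$ whenever $X$ is surjective, and then to invoke the standard fact that a free action of a smooth affine group admitting a geometric quotient yields an étale-locally trivial principal bundle, from which smoothness of both $\rho|_{L'}$ and $\rho(L')$ follows.
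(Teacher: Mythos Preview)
Your treatment of part (1) and of the image descriptions is essentially the same as the paper's: both reduce to the surjectivity of $(\tsd\pi)_X$ via the computation in \cite[Proposition 11.1]{KP2}, valid for $\char(\bk)\neq 2$, together with \cite[Proposition 10.4]{Har}.

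For part (2), however, your route diverges genuinely from the paper's. You argue via the $G(U)$-action: free action plus categorical quotient should give a principal $G(U)$-bundle, hence smoothness. The paper instead exploits the transitive $\GL(V)$-action by right multiplication on $L'$ and on $\rho(L')$: it shows the two orbit maps $q\colon \GL(V)\to L'$ and $q'\colon \GL(V)\to\rho(L')$ are separable (hence smooth) by checking surjectivity of their differentials at the identity, and then reads off smoothness of $\rho|_{L'}$ from the factorization $q'=\rho|_{L'}\circ q$. Surjectivity of $(\tsd q')_{\Id}\colon Z\mapsto -Z^{*}D-DZ$ is the delicate point, and the paper handles it by an explicit matrix computation (Lemma \ref{forthcominglemma1}): decomposing $\gl(V)$ into small $(\tsd q')_{\Id}$-stable pieces $V(i,j)$ and verifying case by case that the rank of the restriction to each piece is independent of the characteristic (for $p>2$), so the known characteristic-zero dimension count transfers.

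Your approach is more conceptual and closer in spirit to \cite{KP2}, but the gap you yourself flag is real: in positive characteristic the passage from ``scheme-theoretically free action with categorical quotient and orbit fibers'' to ``\'etale-locally trivial principal bundle'' is not a one-liner. Your fallback (infinitesimal freeness from injectivity of $\xi\mapsto \xi X$, then invoking that a free action of a smooth affine group with geometric quotient yields a principal bundle) is correct in outline, but you would need to cite a precise statement (e.g.\ from \cite{EGA} or Mumford's GIT) and also verify that $L'$ is $\rho$-saturated so that $\rho|_{L'}$ inherits the geometric-quotient property. The paper sidesteps all of this machinery at the cost of the hands-on computation in Lemma \ref{forthcominglemma1}; indeed, the paper later remarks (after Lemma \ref{locally closed lemma}) that it deliberately avoids the local-triviality argument used over $\bbc$.
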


\begin{proof} The description here on the images of $\pi$ and $\rho$ is the same thing as what has been done in the case of characteristic $0$ given in \cite[Proposotion 11.1]{KP2}. So we only need prove the remaining assertions.

(1)  In the proof of \cite[Proposotion 11.1(1)]{KP2}, the authors calculate explicitly  the tangent map $(\textsf{d}\pi)_X$ of $\pi$ at any $ X \in L'(V,U)$,  showing that $(\textsf{d}\pi)_X$ is surjective.
Their arguments are still valid for the algebraically closed field $\bbk$ (note that $\char(\bbk)> 2$). This is to say, in our case we still have that the tangent map $(\textsf{d}\pi)_X$ is surjective for all $ X \in L'(V,U)$. By \cite[Proposition 10.4]{Har}
$\pi$ is smooth at all points in $L'(U,V)$. We complete the proof of (1).

(2) First notice that  $L'=L'(V,U)$ is one  orbit under $\GL(V)$ by right multiplication and so is $\rho (L')$ under action $D \mapsto (g^{-1})^*\circ D\circ g^{-1} $. And $\rho$ is a $\GL(V)$-equivariant morphism. Fix  $X\in L'$ and let $q$ and $q'$ be two orbit mappings associated with $X$ as below
$$\xymatrix{
  \GL(V) \ar[d]_{q'} \ar[r]^{q} & \GL(V).X\\
     {\GL(V).\rho (X)}}$$
We first  prove $L'\simeq \GL(V)/H$, $\rho (L')\simeq \GL(V)/H'$. Here $H$ (\resp. $H'$) is the centralizer of $X\in L'$ (\resp. $\rho(X) \in \mathfrak{g}(U)$). So by \cite[12.4]{Hum} we only need to prove that both $q$ and $q'$ are separable. For this, by \cite[5.5]{Hum} we only need to prove that $q$ and $q'$ are smooth.  In fact,  $L'$ is  an open subset of the vector space  $L(V,U)$ and  $\mathcal{T}_{X}(L')=L(V,U)$. Then
 we have $\text{Im}((\tsd  q)_{\Id})=X\cdot \mathfrak{gl}(V)$. Here the dot action "$\cdot$" is right multiplication (composition), and "$\Id$" stands for the identity element. On the other side, the surjective property of $X$ implies that $X\cdot \mathfrak{gl}(V)=L(V,U)$.
 Hence $(\tsd q)_\Id$ is surjective. Note that $q$ is an orbit map, thereby $\GL(V)$-equivariant.
 By \cite[Proposition 10.4]{Har},
 $q$ is smooth.

Next we prove $q'$ is smooth. As  $\rho (L')$ is an orbit of $\GL(V)$, by the same arguments as in the last paragraph it suffices for us to prove $(\tsd q')_\Id$ is surjective at the identity element.
We have $(\tsd q')_\Id(Z)=-Z^{*}D-DZ$ with $D=\rho(X) \in \rho(L')$.
 We will further show that $\dim\text{Im}((\tsd q')_\Id)=\dim L'-\dim G(U)=\dim \rho(L')$ in the forthcoming Lemma \ref{forthcominglemma1} (the second equality is due to the fact that  $\rho^{-1}(D)$ is isomorphic to $G(U)$). Thus, $\tsd q'$ is surjective at $\Id$, thereby surjective at  all  points in $\GL(V)$. So  $q'$ is smooth.
 Thus we have $L'\simeq \GL(V)/H$ and $\rho (L')\simeq \GL(V)/H'$. Note that under the  above identification,  $q'=\rho|_{L'}\circ q$. Hence $\rho|_{L'}$ is smooth. Therefore the proof is  completed  modulo Lemma \ref{forthcominglemma1}.
\end{proof}

\begin{lemma}\label{forthcominglemma1}
Keep the notations and assumptions in the above proposition. Then
$\dim \text{Im} (\tsd q')_{\Id}=\dim(L')-\dim(G(U))=\dim(\rho(L'))$.
\end{lemma}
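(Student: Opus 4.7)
The second equality $\dim(L')-\dim(G(U))=\dim(\rho(L'))$ is already noted in Proposition~\ref{prop2.7}(2): by Lemma~\ref{fiber lemma} the fibers of $\rho|_{L'}$ are $G(U)$-orbits with trivial stabilizer, each of dimension $\dim G(U)$, so this is just a uniform fiber-dimension count. The real content is the first equality, and my plan is to factor $(\tsd q')_{\Id}$ through $(\tsd\rho)_X$ and then identify $\ker(\tsd\rho)_X$ explicitly.

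First, I would write the orbit map $q\colon\GL(V)\to L'$ as $g\mapsto Xg^{-1}$, so that $q'=\rho|_{L'}\circ q$ and hence $(\tsd q')_{\Id}(Z)=(\tsd\rho)_X(-XZ)$. A direct computation gives $(\tsd\rho)_X(Y)=Y^*X+X^*Y$ for $Y\in L(V,U)=\mathcal{T}_X L'$. Since $X\in L'$ is surjective, the map $\gl(V)\to L(V,U)$, $Z\mapsto XZ$, is also surjective (lift each image vector through $X$), so $\text{Im}(\tsd q')_{\Id}=\text{Im}(\tsd\rho)_X$. Thus it suffices to prove $\dim\ker(\tsd\rho)_X=\dim G(U)$.

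Next I would identify $\ker(\tsd\rho)_X$ with $\mathfrak{g}(U)\cdot X\subseteq L(V,U)$. The inclusion $\mathfrak{g}(U)\cdot X\subseteq\ker(\tsd\rho)_X$ is immediate since $\rho(gX)=\rho(X)$ for $g\in G(U)$; moreover this subspace has dimension $\dim G(U)$, because $\mathfrak{g}(U)\to L(V,U)$, $Z\mapsto ZX$, is injective (by surjectivity of $X$). For the reverse inclusion, suppose $Y^*X+X^*Y=0$. Evaluating at $v\in\ker X$ gives $X^*(Yv)=0$; since $X$ is surjective and the form on $U$ is non-degenerate, $X^*$ is injective, forcing $Yv=0$. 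Hence $\ker X\subseteq\ker Y$, so there is a unique linear $Z\colon U\to U$ with $Y=ZX$. Substituting back, $X^*(Z+Z^*)X=0$, and the injectivity of $X^*$ together with surjectivity of $X$ forces $Z+Z^*=0$, i.e., $Z\in\mathfrak{g}(U)$. This completes the identification $\ker(\tsd\rho)_X=\mathfrak{g}(U)\cdot X$.

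The step I expect to need the most care is the reverse inclusion above, where the linear-algebra fact that $X$ surjective implies $X^*$ injective (relative to the non-degenerate form on $U$) must be invoked twice: once to produce the lift $Z$, and once to extract $Z\in\mathfrak{g}(U)$. All remaining counts fall out, giving $\dim\text{Im}(\tsd q')_{\Id}=\dim L(V,U)-\dim \mathfrak{g}(U)\cdot X=nm-\dim G(U)=\dim(L')-\dim(G(U))$, as desired.
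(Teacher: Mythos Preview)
Your argument is correct and takes a genuinely different, more conceptual route than the paper. The paper fixes an explicit symplectic (resp.\ orthogonal) basis of $V$, chooses a convenient representative $D\in\rho(L')$ (diagonal when $m$ is even, diagonal plus a single off-diagonal entry when $m$ is odd), decomposes $\gl(V)$ into small $(\tsd q')_{\Id}$-stable subspaces $V(i,j)$ of dimension at most four, and then observes that on each block the rank of $(\tsd q')_{\Id}$ is the same as over $\bbc$; the equality therefore follows by \emph{transfer} from the known characteristic-zero case in \cite{KP2}. By contrast, you avoid any basis or case distinction: you factor $(\tsd q')_{\Id}$ through $(\tsd\rho)_X$, reduce the question to computing $\ker(\tsd\rho)_X$, and identify this kernel intrinsically with $\ggg(U)\cdot X$ using only that $X$ is surjective and $X^*$ is injective. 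Your approach is self-contained (it does not appeal to the complex result), uniform in $\varepsilon$ and in the parity of $m$, and in fact makes transparent why $\rho|_{L'}$ should be submersive: the tangent space to the $G(U)$-fiber through $X$ is exactly $\ggg(U)\cdot X$, so your computation is really showing that the fibers of $\rho|_{L'}$ are reduced of the expected dimension. The paper's computation, on the other hand, has the virtue of being completely explicit and of exhibiting concretely how the characteristic-$p$ picture matches the characteristic-$0$ one block by block.
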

\begin{proof} We only need to prove the lemma in the case of $\mathfrak{g}(V)=\mathfrak{sp}(V)$, and omit the arguments for the case of $\mathfrak{g}(V)=\mathfrak{o}(V)=\mathfrak{so}(V)$ because the latter is the same. So we assume $\dim(V)=2l\geq 4$ (it is trivial when  $\dim(V)=2$), and further take $\{e_{1},\cdots ,e_{l}$,$e_{1+l},\cdots, e_{2l}\}$, a basis of $V$ compatible with the quadratic form $(\cdot,\cdot)$. This is to say $(e_{i},e_{j})=\mbox{sgn}(i-j)\delta_{l,\mid i-j \mid}$.
 Then by a direct computation we have
 $$
E_{i,j}^{*}=\begin{cases}
E_{j+l,i+l},  &\mbox{for } i,j\leq l; \\
E_{j-l,i-l}& \mbox{for }l < i,j\leq 2l; \\
E_{j-l,i+l}& \mbox{for } i\leq l<j; \\
E_{j+l,i-l}& \mbox{for }j\leq l <i.
\end{cases} $$
Now we view  $(\tsd q')_\Id$ (composing with  $\mathfrak{g}(V)\hookrightarrow \mathfrak{gl}(V)$) as an  element of $\End(\mathfrak{gl}(V))$. We proceed the arguments by different cases.

(1) Suppose $m$ is even, say $2k$. By description of $\rho(L')$ which is an orbit in the above proposition,  we may take $D$ to be a diagonal matrix of rank $m$ as below
$$D=\mbox{diag} \{1,\dots 1,0 ,\dots ,0,-1,\dots -1,0 ,\dots ,0 \},$$
where $k$-times $1$ and $k$-times $-1$ foremost and continuously occur  in the first $l$-block and the last $l$-block respectively. It is elementary to check that subspace  $V(i,j):=\bbk\mbox{-span}\{ E_{i,j}, E_{i,j}^{*}\}$  are stable under the map $(\tsd q')_\Id(Z)=-Z^{*}D-DZ$. And
$$\mathfrak{gl}(V)=\bigoplus_{i,j\leq l}V(i,j) \oplus \bigoplus_{i\leq l ,l<j,j-l\leq i}V(i,j)\oplus \bigoplus_{j\leq l ,l<i,i-l\geq j}V(i,j).$$
We find by explicit calculation  that the dimension of image of restriction of  $(\tsd q')_\Id$ in each $V(i,j)$ are independent of $\mbox{char}(\mathbb{K})$ (under assumption $\mbox{char}(\bbk) > 2$ ). Note that the statement of Lemma holds in the case of complex numbers. 
So the lemma follows in this case.

(2) Suppose $m$ is odd, say $2k+1$. By the same reason as before we may take $D$  to be a  matrix of rank $m$ as below
$$D=\mbox{diag} \{1,\dots 1,0 ,\dots ,0,-1,\dots -1,0 ,\dots ,0 \}+E_{1,l+1},$$
where for the diagonal matrix,  $k$-times $1$ and $k$-times $-1$ foremost and continuously occur  in the first $l$-block and the last $l$-block respectively.
 We construct $V(i,j)$ (it is enough to do for some pairs of $(i,j)$) as below
$$V(i,j) =\begin{cases}
\bbk\mbox{-span}\{ E_{i,j}, E_{i,j}^{*}\}, \mbox{ for }(i,j)\neq (1,k) ,(l+1,k),(k,1)£¬(k,l+1), \forall k \leq 2l;\\
\bbk\mbox{-span}\{ E_{i,j},E_{j-l,1},E_{1,j},E_{j-l,l+1}\},\mbox{ for }i=l+1,l+1<j<2l;  \\
\bbk\mbox{-span}\{ E_{l+1,j},E_{1,j},E_{l+j,l+1},E_{l+j,1}\},\mbox{ for }  i=l+1,,1<j<l; \\
\bbk\mbox{-span}\{ E_{1,1}, E_{1,l+1},E_{l+1,1},E_{l+1,l+1}\}, \mbox{ for } (i,j) =(1,1).
\end{cases}$$
 It is easy to check that those subspaces of $\mathfrak{gl}(V)$ are stable under $(\tsd q')_{\Id}$ and $\mathfrak{gl}(V)$  can be written as a
direct sum of some of those $V(i,j)$'s. By the  same argument above we can prove our lemma in this case. So we complete the proof of lemma.
\end{proof}

\begin{lemma}\label{locally closed lemma}
 For any locally closed $G(U)$-stable subset $W \subseteq L'$ the image $\rho(W)$ is also locally closed and $\rho|_{W}:W \rightarrow \rho(W)  $ is smooth. Here by the term ``locally closed" it means either in the Zariski topology or in the \'{e}tale topology.
\end{lemma}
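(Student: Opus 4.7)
The plan is to leverage Proposition~\ref{prop2.7}(2), which says $\rho|_{L'}\colon L'\to \rho(L')$ is smooth, together with Lemma~\ref{fiber lemma}, which identifies the fibers of $\rho|_{L'}$ with the $G(U)$-orbits on $L'$. Combined, these say that $\rho|_{L'}$ is a surjective open map (since smooth morphisms are open) whose fibers are exactly the $G(U)$-orbits, so it is a topological quotient map for the $G(U)$-action. In particular, any $G(U)$-stable subset $S\subseteq L'$ is automatically saturated, $S=\rho|_{L'}^{-1}(\rho(S))$, and $S$ is closed (resp. open) if and only if $\rho(S)$ is closed (resp. open) in $\rho(L')$.

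First I would write the locally closed $G(U)$-stable set $W$ as $W=\overline{W}\cap O$, with $O:=L'\setminus(\overline{W}\setminus W)$ and closure taken inside $L'$. Since $W$ is $G(U)$-stable, so are $\overline{W}$, $\overline{W}\setminus W$, and $O$, so by the quotient property $\rho(\overline{W})$ is closed and $\rho(O)$ is open in $\rho(L')$. Next I would verify the set-theoretic identity $\rho(W)=\rho(\overline{W})\cap \rho(O)$: the inclusion $\subseteq$ is trivial, while for $\supseteq$ any point $y$ of the intersection has fiber $\rho|_{L'}^{-1}(y)$ equal to a single $G(U)$-orbit, which lies in the $G(U)$-stable sets $\overline{W}$ and $O$, hence in $W$. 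Thus $\rho(W)$ is locally closed in $\rho(L')$, and so also in $\ggg(V)$ since $\rho(L')$ is itself locally closed by Proposition~\ref{prop2.7}(2).

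For smoothness of $\rho|_W$, the saturation identity $W=\rho|_{L'}^{-1}(\rho(W))$ exhibits $\rho|_W\colon W\to \rho(W)$ as the base change of the smooth morphism $\rho|_{L'}\colon L'\to \rho(L')$ along the immersion $\rho(W)\hookrightarrow \rho(L')$; since smoothness is preserved under arbitrary base change, $\rho|_W$ is smooth. The same argument carries over verbatim to the \'{e}tale topology, because smooth morphisms remain open for the \'{e}tale topology and the analogous base-change stability holds on the \'{e}tale site.

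The delicate point I expect to spend most care on is the set-theoretic equality $\rho(\overline{W}\cap O)=\rho(\overline{W})\cap \rho(O)$, which would fail for a generic smooth surjection. It is precisely the coincidence of the fibers of $\rho|_{L'}$ with full $G(U)$-orbits, supplied by Lemma~\ref{fiber lemma}, that forces every $G(U)$-stable subset to be saturated and thereby makes image and preimage commute with intersections and complements among such subsets.
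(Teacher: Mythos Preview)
Your proposal is correct and follows essentially the same route as the paper: both arguments establish that $\rho|_{L'}$ is open, use Lemma~\ref{fiber lemma} to see that $G(U)$-stable subsets are saturated, deduce that the image of a locally closed $G(U)$-stable set is locally closed, and then obtain smoothness of $\rho|_W$ by base change from Proposition~\ref{prop2.7}(2). The only cosmetic differences are that the paper packages the topological step into a separate lemma (Lemma~\ref{top lemm}) and deduces openness of $\rho|_{L'}$ from the factorization through the $\GL(V)$-orbit maps rather than directly from smoothness as you do.
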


  In the case of complex numbers  and the \'{e}tale topology, the above  lemma  is a direct consequence of the  fact that $\rho|_{L'}$ is local trivial fibration in the \'{e}tale topology (see remark after \cite[Proposition 11.1]{KP1}).  In the case of positive-characteristic fields, we give another  proof with aid of the following lemma.

\begin{lemma}\label{top lemm}
Let $f:X \rightarrow Y$ be a continuous, surjective  and open map of topology spaces. Assume that $W \subseteq X$ is locally closed, and the subset  $W$ and  its closure $\overline{W}$ are full,  then $f(W)$ is also locally closed. (Call a subset $W \subseteq X$  full provided that $W=f^{-1}(f(W))$.)
\end{lemma}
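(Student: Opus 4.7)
The plan is to realize $f(W)$ as the intersection of an open subset and a closed subset of $Y$. Since $W$ is locally closed in $X$, I can write $W = U \cap \overline{W}$ for some open $U \subseteq X$: if $W = U \cap C$ with $U$ open and $C$ closed, then $\overline{W} \subseteq C$ and intersecting with $U$ gives $W = U \cap \overline{W}$. The two tasks that remain are then (i) to show that $f(\overline{W})$ is closed in $Y$, and (ii) to show that $f(W) = f(U) \cap f(\overline{W})$; the image $f(U)$ is automatically open in $Y$ because $f$ is an open map.

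For step (i), I would first record the following general observation, which is the only place the hypotheses on $f$ are really used: if $f : X \to Y$ is continuous, surjective, and open, then a subset $S \subseteq Y$ is open (respectively, closed) if and only if $f^{-1}(S)$ is open (respectively, closed). The forward direction is continuity; for the reverse, surjectivity gives $f(f^{-1}(S)) = S$, and openness of $f$ then transports openness of $f^{-1}(S)$ to openness of $S$, with the closed case following by taking complements. Applied to $S = f(\overline{W})$, the fullness hypothesis on $\overline{W}$ gives $f^{-1}(f(\overline{W})) = \overline{W}$, which is closed; hence $f(\overline{W})$ is closed.

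For step (ii), the inclusion $f(W) \subseteq f(U) \cap f(\overline{W})$ is immediate. Conversely, given $y \in f(U) \cap f(\overline{W})$, choose $u \in U$ with $f(u) = y$; since $y \in f(\overline{W})$ and $\overline{W}$ is full, $u \in f^{-1}(f(\overline{W})) = \overline{W}$. Therefore $u \in U \cap \overline{W} = W$ and $y = f(u) \in f(W)$. Combining the two steps exhibits $f(W)$ as the intersection of the open set $f(U)$ with the closed set $f(\overline{W})$, so $f(W)$ is locally closed in $Y$.

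The argument is essentially formal, so no step is a real obstacle; the main conceptual point is to recognize that fullness of $\overline{W}$ is exactly the hypothesis that makes both the closedness of $f(\overline{W})$ and the image/preimage bookkeeping in step (ii) go through. The proof uses only the abstract topological properties assumed of $f$, so it applies uniformly whether one works in the Zariski or in the \'etale topology.
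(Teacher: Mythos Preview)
Your argument is correct. The paper's proof is a close cousin but uses a complementary decomposition: rather than writing $W = U \cap \overline{W}$ and showing $f(W) = f(U) \cap f(\overline{W})$, the paper observes that both $\overline{W}$ and $\overline{W}\setminus W$ are closed and full (the latter following from fullness of $W$ and $\overline{W}$), hence both have closed image, and then writes $f(W) = f(\overline{W}) \setminus f(\overline{W}\setminus W)$, which is open in the closed set $f(\overline{W})$.

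The two arguments are equally short, but yours has a small advantage: it only invokes the fullness of $\overline{W}$, never of $W$ itself. Thus your proof actually establishes a slightly stronger statement than the lemma as written. The paper's route, by contrast, genuinely needs fullness of $W$ to conclude that $\overline{W}\setminus W$ is full. In the application (Lemma~\ref{locally closed lemma}) both $W$ and $\overline{W}$ are $G(U)$-stable, so either version suffices.
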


\begin{proof}
Since $f$ is open and $\overline{W}$ ({\sl{resp.}} $\overline{W}-W$) is closed and full, so $f(\overline{W})$ ({\sl{resp.}} $f(\overline{W}-W)$) is closed.
This implies  $f(W)=f(\overline{W})-f(\overline{W}-W)$ is open in its closure $\overline{f(W)}=f(\overline{W})$.
\end{proof}

\begin{proof}\textit{(of Lemma \ref{locally closed lemma})} Since smoothness is preserved under base changes(cf. \cite[Proposition 10.1 in Ch.III]{Har}, or \cite[(4.9-4.10)]{GW}), the second statement follows from
Proposition \ref{prop2.7}(2) and the first statement. So it is enough to prove the first statement.

 Keep in mind the notations appearing in the proof of Proposition \ref{prop2.7}, where we  have $L'\simeq \GL(V)/H$ and $\rho (L')\simeq \GL(V)/H'$. Under the identification
we have  $q'=\rho|_{L'}\circ q$. Note that $q'$ is open map by \cite[12.1(2)]{Hum}, hence $\rho|_{L'}$ is also open map . Since $W$ is $G(U)$-stable, so is its closure $\overline{W}$ in $L'$. Hence both $W$ and $\overline{W}$  are  full by Lemma  \ref{fiber lemma}. Combining with Lemma
\ref{top lemm}, we complete the proof.
\end{proof}

\section{smoothly equivalent singularities}

\subsection{} Maintain the notations as previously. Let us first recall some general theory.
\begin{defn} \label{def3.1} Let $X, Y$ be two varieties, and $x \in X$, $ y \in Y$. We call singularity of $X$ at $x$  \textit{smoothly equivalent} to  singularity of $Y$ at $y$ if there exist a pair of $(Z,z)$ with $Z$ a variety containing a point $z$ and  two morphisms $\varphi,\psi$:
$$\xymatrix{
  Z \ar[d]_{\psi} \ar[r]^{\varphi} &     X   \\
  Y                     }$$
with $\varphi(z)=x,\psi(z)=y$ and $\varphi,\psi$ are smooth at $z$. This defines an equivalent relation between pointed varieties and denote the equivalence class of $(X,x)$ by $\text{Sing}(X,x)$.
\end{defn}
In this paper, for a subset $X' \subset X $ such that   $\text{Sing}(X,x')=\text{Sing}(X,y')$ holds for all $x',y' \in X'$,the notation $\text{Sing}(X,X')$ is short-hand for  $\text{Sing}(X,x')$ where $x'$ is any point in  $X'$.

\begin{lemma} \label{smooth lemma}
Assume $\mbox{Sing}(X,x)=\mbox{Sing}(Y, y)$. Then $X$ is normal at $x$ if and only if $Y$ is normal at $y$.
\end{lemma}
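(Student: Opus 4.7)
The plan is to deduce the lemma from the standard commutative-algebra fact that, for a flat local homomorphism of Noetherian local rings with geometrically normal closed fiber, normality of the source is equivalent to normality of the target. Smooth morphisms are the geometric incarnation of exactly such ring maps, so the lemma should follow almost formally from the definition of \textit{smoothly equivalent}.

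First I would unwind the hypothesis. By Definition \ref{def3.1}, there exist a pointed variety $(Z,z)$ and morphisms $\varphi:Z\to X$, $\psi:Z\to Y$ smooth at $z$ with $\varphi(z)=x$, $\psi(z)=y$. Passing to local rings, this yields two local homomorphisms $\varphi^{\sharp}:\mathcal{O}_{X,x}\to\mathcal{O}_{Z,z}$ and $\psi^{\sharp}:\mathcal{O}_{Y,y}\to\mathcal{O}_{Z,z}$, each of which is flat (smoothness implies flatness) and whose closed fiber is regular (smoothness implies geometrically regular fibers), hence a fortiori normal.

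Next I would invoke the following standard criterion (see e.g.\ EGA IV$_2$, 6.5.4, or Matsumura, \emph{Commutative Ring Theory}, Thm.~23.9): if $A\to B$ is a flat local homomorphism of Noetherian local rings and the closed fiber $B/\mathfrak{m}_A B$ is normal, then $B$ is normal if and only if $A$ is normal. (The easy direction ``$A$ normal $\Rightarrow$ $B$ normal'' amounts to the ascent of Serre's conditions $(R_1)$ and $(S_2)$ along flat maps with normal fibers; the harder direction is the faithfully flat descent of these same two conditions from $B$ to $A$.) Applying this criterion to $\varphi^{\sharp}$ gives that $\mathcal{O}_{X,x}$ is normal iff $\mathcal{O}_{Z,z}$ is normal; applying it to $\psi^{\sharp}$ gives that $\mathcal{O}_{Y,y}$ is normal iff $\mathcal{O}_{Z,z}$ is normal. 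Composing the two equivalences yields the assertion.

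There is essentially no obstacle: the lemma is a formal consequence of the ascent--descent theorem for normality under flat local maps with normal closed fibers, a result which holds over a Noetherian base in arbitrary characteristic and therefore applies verbatim in our positive-characteristic setting. The only mild verification is that smoothness, rather than mere flatness, guarantees the closed-fiber hypothesis; but this follows immediately because the fiber of a smooth morphism at a point $z$ is smooth over the residue field $\kappa(f(z))$, hence regular, hence normal.
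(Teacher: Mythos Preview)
Your proof is correct, and it takes a somewhat different route from the paper's. You pass directly to local rings and invoke the ascent--descent theorem for normality along flat local homomorphisms with normal closed fibers (EGA~IV$_2$, 6.5.4; Matsumura, Thm.~23.9), applied to the two ring maps $\mathcal{O}_{X,x}\to\mathcal{O}_{Z,z}$ and $\mathcal{O}_{Y,y}\to\mathcal{O}_{Z,z}$. The paper argues more geometrically: it uses the local factorization of a smooth morphism as an \'etale map followed by a projection $\mathbb{A}^n_U\to U$ (cf.\ \cite[Ch.~I, 3.24(b)]{Mil}), and then applies separately that \'etale maps preserve normality (\cite[Ch.~I, 3.17(b)]{Mil}) and that a commutative ring $A$ is normal iff $A[T]$ is. Your argument is shorter and pinpoints the exact commutative-algebra input required; the paper's argument is essentially a hands-on verification of that same input in the smooth case, trading a single black-box citation for a more explicit factorization.
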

\begin{proof}
Let $\varphi:Z \rightarrow X $ be a smooth map. For a point $z \in Z$, write $\varphi(z)=x$. By \cite[3.24(b) in Ch.1]{Mil} there are affine neighbourhood $V$ of $z$
and $U$ of $x$ and an \'{e}tale map $g:V \rightarrow V'$¡¡
¡¡¡¡¡¡(where  $V'=\mathbb{K}_{U}^{n}$ is the affine $n$-space over $U$) such that
restriction of $\varphi$  to $U$ is $\varphi|_{U}=p \circ g$,¡¡
¡¡¡¡where $p:V' \rightarrow U$ is the natural projection.

Since an \'{e}tale map preserves normality, we have $V$ is normal at $z$ if and only if $V'$ is normal at $g(z)$ by \cite[3.17(b) in Ch.1]{Mil}.
We also have $V'$  is normal at $g(z)$ if and only if $U$ is normal at $x$ by the fact that a commutative ring $A$ is normal if and only if
the polynomial ring $A[T]$ is normal.
\end{proof}

\subsection{}  In spirit of  the arguments in \cite{KP2} for the complex classical groups, we will manage to make the propositions there of  ``induction by canceling rows" and of ``induction by canceling columns" revived in our case.

\begin{theorem}\label{3.3}
Assume that the $\varepsilon$-degeneration $\sigma\leq \eta$ is obtained from the $\varepsilon'$-degeneration $\sigma'<\eta'$ by adding rows and columns. Then
$$\mbox{Sing}(\overline{\mathcal{O}_{\varepsilon,\eta}},\mathcal{O}_{\varepsilon,\sigma})
=\mbox{Sing}(\overline{\co_{\ve',\eta'}},\co_{\ve',\sigma'}).$$
\end{theorem}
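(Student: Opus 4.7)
The plan is to reduce by iteration to the two base cases of erasing a single common column ($r=0$, $s=1$) and a single common row ($r=1$, $s=0$), and to handle each via a smooth correspondence of the form required by Definition~\ref{def3.1}. Since smooth equivalence is transitive, chaining these base cases along the partition data $(r,s)$ will yield the general statement.

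For the column case, I would use the canonical maps of Section~2.3 with $V$ of type $\varepsilon$, $\dim V = |\eta|$, and $U$ of type $-\varepsilon = \varepsilon'$, $\dim U = |\eta'|$. Set
\[
Z := N_{\varepsilon,\eta} \cap L'(V,U) = \pi^{-1}\bigl(\overline{\co_{\ve',\eta'}}\bigr) \cap L'(V,U).
\]
Then $Z$ is locally closed and $G(U)\times G(V)$-stable. By Proposition~\ref{prop2.7} together with Lemma~\ref{locally closed lemma}, both $\rho|_Z \colon Z \to \overline{\co_{\ve,\eta}}$ and $\pi|_Z \colon Z \to \overline{\co_{\ve',\eta'}}$ are smooth morphisms onto their images. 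Lemma~\ref{lemma2.6}(1)(3), applied simultaneously to the orbit $\co_{\ve,\eta}$ and (via part (4)) to its subdegeneration $\co_{\ve,\sigma}$, yields the orbit-level compatibility $\rho^{-1}(\co_{\ve,\sigma})\cap L' = \pi^{-1}(\co_{\ve',\sigma'})\cap L'$, and this intersection is nonempty. Choosing any $z$ in it, the triple $(Z, z; \rho, \pi)$ realizes Definition~\ref{def3.1} and so $\mathrm{Sing}(\overline{\co_{\ve,\eta}},\co_{\ve,\sigma}) = \mathrm{Sing}(\overline{\co_{\ve',\eta'}},\co_{\ve',\sigma'})$.

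For the row case I would use a bundle construction. Let $t$ be the common length of the first row of $\sigma$ and $\eta$. Fix a nondegenerate quadratic subspace $V_0 \subseteq V$ of type $\varepsilon$ supporting a nilpotent $D_0 \in \ggg(V_0)$ whose Jordan type is a single block of size $t$, and write $V_1 = V_0^{\perp}$. Let $P \subseteq G(V)$ be the stabilizer of $V_0$, set
\[
\mathfrak{n} := \{\, D \in \ggg(V) \mid D|_{V_0} = D_0,\ D(V_1) \subseteq V_1 \,\},
\]
and consider the homogeneous bundle $Y := G(V) \times^{P} \mathfrak{n}$ together with the collapsing map $\alpha \colon Y \to \ggg(V)$ and the projection $\beta \colon \mathfrak{n} \to \ggg(V_1)$, $D \mapsto D|_{V_1}$. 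The image of $\alpha$ is the closure $\overline{\co_{\ve,\eta}}$ on a neighborhood of $\co_{\ve,\sigma}$, while $\beta$ factors through $\overline{\co_{\ve,\eta'}}$ with $\sigma$-fibre matching $\co_{\ve,\sigma'}$. Smoothness of $\alpha$ and $\beta$ on the relevant loci would be verified by a tangent-space surjectivity computation analogous to Lemma~\ref{forthcominglemma1}; a base point in the common preimage of $\co_{\ve,\sigma}$ and $\co_{\ve,\sigma'}$ then gives the smooth equivalence as in Definition~\ref{def3.1}.

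The main obstacle is the row case: one must check in positive characteristic that $\alpha$ is smooth (so that the local structure of $\overline{\co_{\ve,\eta}}$ is pulled from that of $Y$), that the projection $\beta$ is smooth (the new computation, modelled on Lemma~\ref{forthcominglemma1} but with a block‑form base point), and that $G(V)$-translates of $\mathfrak{n}$ do sweep out $\overline{\co_{\ve,\eta}}$ in a neighborhood of $\co_{\ve,\sigma}$ (a representative-of-block-form claim for every $\sigma \leq \eta$ with common first row of length $t$). Separability for the bundle quotient $G(V)\times^{P}\mathfrak{n}$, which was automatic in \cite{KP2} over $\mathbb{C}$, must be spelled out using $\mathrm{char}(\bk) > 2$. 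Once both base cases are in hand, induction on $r+s$ finishes the theorem.
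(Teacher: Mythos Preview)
Your column case is essentially the paper's Proposition~\ref{prop3.8} and is correct as stated. The row case, however, is where your proposal diverges from the paper and where it has a genuine gap.

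The paper does \emph{not} use a homogeneous-bundle collapsing construction for the row step. Instead (Proposition~\ref{propcancellingrows} and Lemma~\ref{forthcoming3.6}) it follows the cross-section strategy of \cite[Propositions~13.1 and 13.4]{KP2}: one decomposes $V=W\oplus V'$ with $W$ carrying the single-block nilpotent $z$ of the erased row, introduces the four groups $G=\GL(V)$, $G'=\GL(W)\times\GL(V')$, $H=G(V)$, $H'=G(W)\times G(V')$, and exhibits an explicit $H'$-stable decomposition $\ggg=\hhh'\oplus M'\oplus M_0\oplus D$ built from the $*$-involution of the form (Claim~\ref{claim}). This yields cross sections $S,S',S_0,S_0'$ of $\overline{Gx},\overline{G'x},\overline{Hx},\overline{H'x}$ at $y\in\co_{\ve,\sigma}$ (Lemma~\ref{forthcominglemma3.7}). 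The decisive external input is Donkin's theorem \cite{don} that the $\GL$-orbit closure $\overline{Gx}$ is normal in positive characteristic: together with the codimension equality this forces $S$ and $S'$ to share an open neighbourhood of $y$, and intersecting with $\hhh$ shows that $S_0$ and $S_0'$ share one as well, whence $\mathrm{Sing}(\overline{Hx},y)=\mathrm{Sing}(\overline{H'x},y)$.

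Your bundle approach runs into two concrete problems. First, with $P$ the stabiliser of the nondegenerate subspace $V_0$ one has $P\cong G(V_0)\times G(V_1)$, and then $\mathfrak{n}=\{D:D|_{V_0}=D_0,\ D(V_1)\subseteq V_1\}$ is \emph{not} $P$-stable: for $p=(p_0,p_1)\in P$ one gets $\Ad(p)D|_{V_0}=p_0D_0p_0^{-1}$, which equals $D_0$ only when $p_0$ centralises $D_0$. So $G(V)\times^{P}\mathfrak{n}$ is not well defined as written. Second, and more seriously, even after shrinking $P$ to repair this, a collapsing map $\alpha$ of this shape is generically birational onto the orbit closure rather than smooth; smoothness of $\alpha$ precisely over the singular stratum $\co_{\ve,\sigma}$ is the whole content of the statement you are trying to prove, and there is no analogue of Lemma~\ref{forthcominglemma1} that delivers it. The paper's cross-section argument sidesteps both issues by comparing transverse slices inside an ambient \emph{normal} $\GL$-orbit closure, and Donkin's normality theorem is exactly what makes that comparison go through in characteristic $p$.
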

For this, we will make some necessary preparation. Theorem \ref{3.3} will directly follow from the forthcoming Propositions \ref{propcancellingrows} and \ref{prop3.8}.

\begin{prop} \label{propcancellingrows} (Induction by canceling rows)
Assume that the $\varepsilon$-degeneration $\sigma\leq \eta$ is obtained from the $\ve$-degeneration $\sigma'<\eta'$ by adding rows,
then
$$\mbox{Sing}(\overline{\co_{\ve,\eta}},\co_{\ve,\sigma})=\mbox{Sing}
(\overline{\co_{\ve',\eta'}},\co_{\ve',\sigma'}).$$
\end{prop}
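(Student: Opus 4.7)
My plan is to adapt the Kraft–Procesi argument for the corresponding statement over $\bbc$ in \cite{KP2} via a correspondence variety carrying two smooth projections. Write $\mu=(\eta_1,\ldots,\eta_r)$ for the partition of the common first $r$ rows, so that $\eta=\mu\cup\eta'$ and $\sigma=\mu\cup\sigma'$ as multisets of parts. Since $\char(\bbk)>2$, one can orthogonally decompose $V=W\oplus V'$ into quadratic spaces of type $\ve$ with $\dim W=|\mu|$ and $\dim V'=|\eta'|$, producing a block inclusion $\ggg(W)\oplus\ggg(V')\hookrightarrow\ggg(V)$. Pick representatives $x_0\in\co_{\ve,\mu}\subset\ggg(W)$ and $y_0\in\co_{\ve,\sigma'}\subset\ggg(V')$; the element $x_0+y_0\in\ggg(V)$ has partition type $\mu\cup\sigma'=\sigma$, so $x_0+y_0\in\co_{\ve,\sigma}$.

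Next I would consider the diagram
\[
\overline{\co_{\ve,\eta}} \xleftarrow{\;\varphi\;} Z:=G(V)\times\overline{\co_{\ve,\eta'}}\xrightarrow{\;\psi\;}\overline{\co_{\ve,\eta'}},
\]
where $\varphi(g,Y)=g(x_0+Y)g^{-1}$ and $\psi(g,Y)=Y$. The image of $\varphi$ sits in $\overline{\co_{\ve,\eta}}$ because the partition of $x_0+Y$ is $\mu\cup(\text{type of }Y)\le\eta$. At the point $z_0:=(\Id,y_0)$ one has $\varphi(z_0)=x_0+y_0\in\co_{\ve,\sigma}$ and $\psi(z_0)=y_0\in\co_{\ve,\sigma'}$, so Definition~\ref{def3.1} will apply once both $\varphi$ and $\psi$ are shown to be smooth at $z_0$.

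The projection $\psi$ is trivially smooth. For $\varphi$ I would compute $(\tsd\varphi)_{z_0}(\xi,Y)=[\xi,x_0+y_0]+Y$ and show that its image exhausts the Zariski tangent space $T_{x_0+y_0}\overline{\co_{\ve,\eta}}$: the subspace $[\ggg(V),x_0+y_0]$ coincides with $T_{x_0+y_0}\co_{\ve,\sigma}$ (with $p>2$ making the adjoint orbit map separable), while $T_{y_0}\overline{\co_{\ve,\eta'}}$ (embedded via $\ggg(V')\subset\ggg(V)$) supplies the transverse directions inside $\overline{\co_{\ve,\eta}}$. The codimension identity $\mathrm{codim}_{\overline{\co_{\ve,\eta}}}\co_{\ve,\sigma}=\mathrm{codim}_{\overline{\co_{\ve,\eta'}}}\co_{\ve,\sigma'}$ from Lemma~\ref{cancelinglemma}(2) matches the dimensions, and a constant-fiber-dimension argument along the $G(V)$-factor upgrades the tangential surjectivity to smoothness of $\varphi$ at $z_0$.

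The main obstacle is precisely this smoothness check for $\varphi$ at a point where the target $\overline{\co_{\ve,\eta}}$ is typically singular. In the complex case Kraft–Procesi resolve this with a Slodowy-type transversal slice; here the substitute is a direct differential and flatness analysis that depends essentially on $\char(\bbk)>2$, both for producing the orthogonal splitting of $V$ and for the separability that identifies orbit tangent spaces with $[\ggg(V),\cdot\,]$-images. Should subtle difficulties arise in verifying flatness at $z_0$, one can restrict $\varphi$ to a $G(V)$-saturated open subset of $Z$ and transport the smoothness conclusion using Lemma~\ref{locally closed lemma} together with Proposition~\ref{prop2.7}.
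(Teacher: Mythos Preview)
Your approach diverges from the paper's and has a genuine gap at the crucial step. You want to show that $\varphi:G(V)\times\overline{\co_{\ve,\eta'}}\to\overline{\co_{\ve,\eta}}$ is smooth at $z_0=(\Id,y_0)$. But both the source and the target are singular at the relevant points (we are precisely at degenerations $\sigma'<\eta'$ and $\sigma<\eta$), so the criterion you invoke---surjectivity of $(\tsd\varphi)_{z_0}$ via \cite[III.10.4]{Har}---does not apply: that criterion requires the source to be nonsingular. Moreover, your claim that $[\ggg(V),x_0+y_0]+T_{y_0}\overline{\co_{\ve,\eta'}}$ exhausts $T_{x_0+y_0}\overline{\co_{\ve,\eta}}$ is itself unjustified: at a singular point the Zariski tangent space of $\overline{\co_{\ve,\eta}}$ is strictly larger than $\dim\overline{\co_{\ve,\eta}}$, and the codimension identity of Lemma~\ref{cancelinglemma}(2) concerns variety dimensions, not tangent-space dimensions. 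So neither ``tangential surjectivity'' nor ``matching dimensions'' is available to you. Finally, your suggested fallback to Lemma~\ref{locally closed lemma} and Proposition~\ref{prop2.7} is misplaced: those results concern the maps $\pi,\rho$ on $L(V,U)$ used for cancelling \emph{columns}, and have no bearing on the row situation.

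The paper's argument is substantively different and this difference is where the content lies. It embeds the problem into the $\GL(V)$ setting by considering simultaneously $G=\GL(V)$, $G'=\GL(W)\times\GL(V')$, $H=G(V)$, $H'=G(W)\times G(V')$, and constructs \emph{cross sections} $S,S',S_0,S_0'$ at $y$ using an explicit $H'$-stable decomposition $\ggg=\hhh'\oplus M'\oplus M_0\oplus D$ (Claim~\ref{claim}); finding such a decomposition is exactly the obstruction the paper flags as nontrivial in positive characteristic. The comparison $\mbox{Sing}(S_0,y)=\mbox{Sing}(S_0',y)$ is then achieved by showing $S_0$ and $S_0'$ share a common open neighbourhood of $y$, and this step uses in an essential way Donkin's theorem \cite{don} that $\overline{Gx}$ (a $\GL$ orbit closure) is normal at $y$, hence the cross section $S$ is normal there. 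Your proposal bypasses both the $\GL$ embedding and Donkin's result, and without them there is no mechanism to control the singular locus.
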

This proposition corresponds to \cite[Proposition 13.4]{KP2}. The proof of \cite[Proposition 13.4]{KP2} is dependent on \cite[Proposition 13.1]{KP2}. However, the arguments for the latter can not carry out in positive characteristic because it seems impossible to find $H'$-stable decomposition used there for arbitrary reductive group. Fortunately, we observe that some desirable decompositions still exist for the groups we are concerned with (we will write out it explicitly, see Lemma \ref{forthcoming3.6}).  Let us first recall the notion of cross sections (see \cite[12.4]{KP2}).
\begin{defn}
Let $X$ be a variety with a regular action of an algebraic group $G$. A \textit{cross section }at a point $x \in X $ is defined to be a locally closed subvariety
$S\subseteq X$ such that $x\in S$ and the map $G \times S \longrightarrow X, (g,s)\mapsto g\cdot s$, is smooth at the point $(e,x)$. Of course we have $\text{Sing}(S,x)=\text{Sing}(X,x)$.
\end{defn}

\noindent \textsc{Proof of Proposition \ref{propcancellingrows}}:  (Here we just replace the notations $D,D',E,E',F$ in the proof of  \cite[ Proposition 13.4]{KP2} by $x,x',y,y',z$ respectively)  Let $V$ be a quadratic space of type $\varepsilon$ of dimension $|\eta|$, and $x \in \mathcal{O}_{\varepsilon,\eta}\subset \mathfrak{g}(V)$. By assumption the diagrams $\eta$ and $\sigma$ are decomposed into $\eta=\nu+\eta'$, $\sigma=\nu+\sigma'$ respectively where $\nu$ is a common row
of $(\sigma,\eta)$ and  $(\sigma,\eta)$ is obtained from $(\sigma',\eta')$ by adding the diagram  $\nu$. Under the decomposition  $V=W\oplus V'$, write $x=(z,x')\in \mathfrak{g}(W)\oplus \mathfrak{g}(V')$, $y=(z,y')\in \mathfrak{g}(W)\oplus \mathfrak{g}(V')$
where $y\in  \mathcal{O}_{\varepsilon,\sigma}$; $x' \in \mathcal{O}_{\varepsilon,\eta'} \subseteq  \mathfrak{g}(V')$,
$y' \in  \mathcal{O}_{\varepsilon,\sigma'}\subseteq  \mathfrak{g}(V')$ and $z \in \mathfrak{g}(W)$. Now let us consider the following reductive groups $G:=\GL(V)$,
$G':=\GL(W)\times \GL(V')$, $H:=G(V)$, $H'=G(W)\times G(V')$.  Set $\ggg=\Lie(G)$, $\ggg'=\Lie(G')$, $\hhh=\Lie(H)$ and $\hhh'=\Lie(H)$.
 The assumptions of \cite[Proposition 13.1]{KP2} hold in this context,
namely
\begin{itemize}
\item[(1)] $\text{codim}_{\overline{G'x}}G'y=\text{codim}_{\overline{Gx}}Gy$.
\item[(2)] $\overline{G'x}\cap \mathfrak{h}'=\overline{H'x}$.
\item[(3)] $\overline{Gx}$ is normal at $y$.
\end{itemize}
The above (1) is due to the property of nilpotent orbits in the type $A$. As to (3), it follows from the main result of \cite{don}.
By the same arguments as in the proof of \cite[Proposition 13.4]{KP2}, we only need to prove that the statement of \cite[Proposition 13.1]{KP2} is still valid to our situation. This is to say, we only need to prove that $\text{Sing}(\overline{Hx},y)=\text{Sing}(\overline{H'x},y)$, which will result from Lemma
\ref{forthcoming3.6} presented below.
 The proof is completed modulo the forthcoming lemma.

\begin{lemma} \label{forthcoming3.6} Maintain the notations as in the above proof. 
Then $\text{Sing}(\overline{Hx},y)=\text{Sing}(\overline{H'x},y)$.
\end{lemma}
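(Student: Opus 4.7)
The plan is to supply, by hand, an $H'$-stable linear complement to $\hhh'$ in $\hhh$ whose general existence for arbitrary reductive subgroups is precisely the obstruction noted just after Proposition \ref{propcancellingrows}. With such a complement explicitly in place, the slice-type argument of \cite[Proposition 13.1]{KP2} can be made to run in our setting, using hypotheses (1)--(3) listed in the preceding proof.

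First, I would produce the $H'$-stable decomposition explicitly. The orthogonal direct sum $V = W \oplus V'$ gives every $X \in \hhh = \ggg(V)$ a block form with diagonal blocks $A \in \End(W)$, $D \in \End(V')$ and off-diagonal blocks $B \colon V' \to W$, $C \colon W \to V'$; the skew condition of type $\varepsilon$ forces $A \in \ggg(W)$, $D \in \ggg(V')$, and $C = -B^{*}$, where $B^{*}$ is the adjoint of $B$ with respect to the forms restricted to $W$ and $V'$. Setting $\mmm = \{X \in \hhh : A = 0,\ D = 0\}$ yields a linear direct sum $\hhh = \hhh' \oplus \mmm$, which is manifestly stable under the adjoint action of $H' = G(W) \times G(V')$.

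Next, I would exhibit a smooth morphism witnessing the singularity equivalence. Consider the action map $\mu \colon H \times \overline{H'x} \to \overline{Hx}$, $(h, z) \mapsto h \cdot z$. Once smoothness of $\mu$ at $(e, y)$ is verified, then together with the always-smooth projection $\pi_{2} \colon H \times \overline{H'x} \to \overline{H'x}$ one obtains a pair of smooth maps out of $(H \times \overline{H'x}, (e, y))$ to $(\overline{Hx}, y)$ and $(\overline{H'x}, y)$ respectively, so that $\text{Sing}(\overline{Hx}, y) = \text{Sing}(\overline{H'x}, y)$ by Definition \ref{def3.1}. The differential $(\tsd \mu)_{(e,y)} \colon \hhh \oplus T_{y}(\overline{H'x}) \to T_{y}(\overline{Hx})$ sends $(X, v) \mapsto [X, y] + v$, and using $\hhh = \hhh' \oplus \mmm$ together with $[\hhh', y] \subseteq T_{y}(\overline{H'x})$ its image equals $T_{y}(\overline{H'x}) + [\mmm, y]$. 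Surjectivity onto $T_{y}(\overline{Hx})$ should then follow from a dimension count invoking all three hypotheses: the codimension equality (1) to match dimensions across the two actions, the intersection identity (2) to control how $\overline{G'x} \cap \hhh'$ sits inside $\overline{H'x}$, and the normality of $\overline{Gx}$ at $y$ from (3) to identify the tangent space $T_{y}(\overline{Hx}) = T_{y}(\overline{Gx}) \cap \hhh$ with what the image of $(\tsd \mu)_{(e,y)}$ actually fills. A parallel computation handles the fiber-dimension requirement.

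The main obstacle is exactly this dimension count justifying surjectivity. Over $\bbc$, one would invoke Luna's étale slice theorem directly, relying on complete reducibility of $H'$-representations and on locally defined exponentials; in positive characteristic, neither is available in the generality one would like, which is why \cite[Proposition 13.1]{KP2} fails to transplant verbatim. The saving feature is that the explicit complement $\mmm$ produced above, together with hypotheses (1)--(3) (which import the normality result of Donkin \cite{don} through (3) and the type~$A$ codimension formula through (1)), should suffice to push the tangent-space/fiber-dimension comparison through without further appeal to characteristic-zero machinery. Verifying that the decomposition $\hhh = \hhh' \oplus \mmm$ interacts with $T_{y}(\overline{Hx})$ in precisely the transverse way demanded by the slice argument is the delicate step I expect to require the most care.
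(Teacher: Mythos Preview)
Your explicit complement $\mmm\subset\hhh$ is correct and is exactly the paper's space $M_{0}$, so that part is fine. The genuine gap is the smoothness argument for $\mu\colon H\times\overline{H'x}\to\overline{Hx}$. The criterion you intend to invoke, namely \cite[Proposition 10.4]{Har}, is stated for morphisms between \emph{nonsingular} varieties; here both source and target are singular at the point in question (indeed, $y$ is singular on $\overline{Hx}$, and generally also on $\overline{H'x}$). Without nonsingularity, surjectivity of $(\tsd\mu)_{(e,y)}$ onto the Zariski tangent space $T_{y}(\overline{Hx})$ neither implies smoothness nor is itself accessible: you have no direct handle on $T_{y}(\overline{Hx})$, and your proposed identification $T_{y}(\overline{Hx})=T_{y}(\overline{Gx})\cap\hhh$ would require $\overline{Hx}=\overline{Gx}\cap\hhh$ \emph{scheme}-theoretically, which is not among hypotheses (1)--(3) and is not something normality of $\overline{Gx}$ alone buys you. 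So the ``delicate step'' you flag is not just delicate but, as formulated, circular.

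The paper circumvents this by never mapping directly between the singular orbit closures. It builds a richer $H'$-stable decomposition of the entire $\ggg=\gl(V)$ (not just of $\hhh$), namely $\ggg=\hhh'\oplus M'\oplus M_{0}\oplus D$ with $\ggg'=\hhh'\oplus M'$ and $\hhh=\hhh'\oplus M_{0}$, and uses it to produce four affine cross sections $S,S',S_{0},S_{0}'$ through $y$ in $\overline{Gx},\overline{G'x},\overline{Hx},\overline{H'x}$. The point is that the cross-section property is verified by showing the action map $H'\times(N'_{0}+y)\to\hhh'$ is smooth at $(e,y)$---a map between \emph{smooth} varieties, so \cite[Proposition 10.4]{Har} applies---and then obtaining smoothness onto the singular closure by \emph{base change}. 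Hypotheses (1) and (3) are used not to compute tangent spaces of $\overline{Hx}$ but to compare the $\GL$-level slices: $\dim_{y}S=\dim_{y}S'$ by (1), and $S$ is normal at $y$ by (3), forcing $S$ and $S'$ to agree on an open neighbourhood of $y$; intersecting with $\hhh$ (and using (2) for $S'_{0}=S'\cap\hhh$) then shows $S_{0}$ and $S'_{0}$ share a neighbourhood of $y$, whence $\text{Sing}(S_{0},y)=\text{Sing}(S'_{0},y)$. Your decomposition of $\hhh$ alone is not enough for this; one really needs the compatible decomposition of all of $\ggg$ to route the argument through the $\GL$-orbits where Donkin's normality result lives.
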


\begin{proof}
Firstly let us make the following claim.

 \begin{claim}\label{claim} There is an  $H'$-stable decomposition of $\ggg$ as below
$$\mathfrak{g}=\mathfrak{h'}\oplus M'\oplus M_{0}\oplus D$$
where $\mathfrak{g'}=\mathfrak{h'}\oplus M'$, $\mathfrak{h}=\mathfrak{h'}\oplus M_{0}$.
\end{claim}

We certify the claim. Suppose that $(\cdot,\cdot)$ is the defining quadratic form of $G$, i.e. $G:=\{ g\in \GL(V)\mid(gv,gu)=(u,v)\}$. For a given $f\in \End(V)$, define
   $f^{*}$ via $(fu,v)=(u,f^{*}v)$. Then we have $H=\{g\in \GL(V)\mid g^{-1}=g^{*}\}$, and
   $\mathfrak{h}=\{x\in \mathfrak{gl}(V)\mid -x=x^{*}\}$. We further have $\End(V)=\mathfrak{g}\bigoplus M$, where $M=\{x\in \End(V)\mid x=x^{*}\}$.
Set    $M'=\{x\in g'\mid x=x^{*}\}$ and $M_{0}=\{x \in \mathfrak{h}\mid x(v)\in W \text{ for all }v \in V';x(w)\in  V' \text{ for all }w \in W \}$,
   $D=\{ x \in \mathfrak{g}\mid x(v)\in W \text{ for all }v \in V'; x(w)\in  V' \text{ for all }w \in W ; x=x^{*}\}.$
It is  easy to verified that those spaces are $H'$-stable and $$\mathfrak{g}=\mathfrak{h'}\oplus M'\oplus M_{0}\oplus D.$$
So the claim follows.

Secondly, by the same argument as in the proof of \cite[Proposition 13.1]{KP1}, we can complete the proof of the lemma.
For the readers' convenience, we  give the detailed arguments.

 Now we have vector space decomposition
$$ \mathfrak{h}'=[\mathfrak{h}',y]\oplus N'_{0},\;\;M'=[M',y]\oplus\overline{N'}$$
and $$M_{0}=[M_{0},y]\oplus\overline{N_{0}},\;\;D=[D,y]\oplus \overline{D}.$$
Set $N:=N'_{0} \oplus \overline{N'}\oplus \overline{N_{0}}\oplus\overline{D}$.
Then we have
\begin{align}\label{decompos}
\mathfrak{g}&=[\mathfrak{g},y]\oplus N\cr
\mathfrak{g'}&=[\mathfrak{g'},y]\oplus N',\mbox{where }N':=N \bigcap \mathfrak{g'},\cr
\mathfrak{h}&=[\mathfrak{h},y]\oplus N_{0},\mbox{where }N_{0}:=N\bigcap\mathfrak{h},\cr
\mathfrak{h'}&=[\mathfrak{h'},y]\oplus N_{0}',\mbox{where }N_{0}':=N\cap\mathfrak{h}'.
\end{align}
So we can define
$$S:=(N+y)\cap\overline{Gx},$$
$$S'=(N'+y)\cap\overline{G'x},$$
$$S_{0}=(N_{0}+y)\cap\overline{Hx},$$
$$S_{0}'=(N_{0}'+y)\cap\overline{H'x}.$$
From the forthcoming Lemma \ref{forthcominglemma3.7}   it follows that those $S,S',S_{0},S_{0}'$ are cross sections of $\overline{Gx},\overline{G'x},\overline{Hx},\overline{H'x}$ at $y$ respectively under the adjoin action of the corresponding groups.
  So we have $\text{Sing}(\overline{H'x},y)=\text{Sing}(S_{0}',y)$, $\text{Sing}(\overline{Hx},y)=\text{Sing}(S_{0},y)$. Hence it is  enough to prove $\text{Sing}(S_{0}',y)=\text{Sing}(S_{0},y)$.
By the same arguments as  in \cite[Proposition 13.4]{KP2},  we have $S_{0}'=S'\cap\mathfrak{h}$.
From  Property (1) listed in the  proof of Proposition \ref{propcancellingrows} and  the forthcoming Lemma \ref{forthcominglemma3.7}, we have $\dim_{y}S=\dim_{y}S'$. 
From Property (3) appearing as previously,
$S$ is normal in $y$. So there  exists an open neighborhood $F$ of $y$ in $S$ such that
$F\subset S'$ and $F$ open in  $ S'$. From the known fact
$$ S\cap\mathfrak{h}\supseteq S_{0}\supseteq S_{0}'=S'\cap\mathfrak{h},$$
we have that $F\cap \mathfrak{h}$ is a common open neighborhood of $y$ in $ S_{0}$ and $S_{0}'$.
Thus we have $\text{Sing}(S_{0}',y)=\text{Sing}(S_{0},y)$. So the proof is completed, modulo the following Lemma \ref{forthcominglemma3.7}.
\end{proof}

\begin{lemma}\label{forthcominglemma3.7} Maintain the notations and assumptions as above. Then
  $S,S',S_{0},S_{0}'$ are cross sections of $\overline{Gx},\overline{G'x},\overline{Hx},\overline{H'x}$ at $y$ respectively under the adjoint action of
corresponding groups. Furthermore,
$\mbox{codim}_{\overline{Gx}}Gy=\mbox{dim}_{y}S$,
$\mbox{codim}_{\overline{Gx'}}Gy'=\mbox{dim}_{y'}S'$,
$\mbox{codim}_{\overline{Hx}}Hy=\mbox{dim}_{y}S_{0}$ and
$\mbox{codim}_{\overline{H'x}}H'y=\mbox{dim}_{y}S'_{0}$
\end{lemma}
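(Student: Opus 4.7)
The plan is to treat all four cross section claims and all four codimension identities by a single uniform argument, parameterized by a reductive group $A$ acting adjointly on $\mathfrak{a}=\Lie(A)$ equipped with a direct sum decomposition $\mathfrak{a}=[\mathfrak{a},y]\oplus N_{\mathfrak{a}}$. Specializing to $(G,\mathfrak{g},N)$, $(G',\mathfrak{g}',N')$, $(H,\mathfrak{h},N_{0})$, $(H',\mathfrak{h}',N_{0}')$ and to the corresponding $A$-stable closed subvariety $X=\overline{Ax}$ recovers the four statements for $S,S',S_{0},S_{0}'$ simultaneously. The four required decompositions are precisely those already assembled in \eqref{decompos}.

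For the cross section claim in the abstract setup, consider the action morphism $\mu:A\times(N_{\mathfrak{a}}+y)\to\mathfrak{a}$, $(a,z)\mapsto\Ad(a)z$. Its differential at $(e,y)$ is the linear map $(\xi,n)\mapsto[\xi,y]+n$, whose image is $[\mathfrak{a},y]+N_{\mathfrak{a}}=\mathfrak{a}$; the Jacobian criterion (\cite[Proposition 10.4 in Ch.III]{Har}) then gives smoothness of $\mu$ at $(e,y)$. Since $X$ is $A$-stable, $\mu^{-1}(X)=A\times((N_{\mathfrak{a}}+y)\cap X)=A\times S_{X}$, so the restricted action map $\bar\mu:A\times S_{X}\to X$ is the base change of $\mu$ along $X\hookrightarrow\mathfrak{a}$ and hence smooth at $(e,y)$, which is precisely the cross section condition.

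For the codimension formula, smoothness of $\bar\mu$ at $(e,y)$ forces the local relative dimension $\dim A+\dim_{y}S_{X}-\dim_{y}X$ to equal $\dim_{(e,y)}\bar\mu^{-1}(y)$. Parametrize the fiber $\bar\mu^{-1}(y)$ by $\{a\in A:a^{-1}y\in S_{X}\}$ via $a\mapsto(a,a^{-1}y)$. The directness of $\mathfrak{a}=[\mathfrak{a},y]\oplus N_{\mathfrak{a}}$ makes $N_{\mathfrak{a}}+y$ transverse to the orbit $Ay$ at $y$, so $(N_{\mathfrak{a}}+y)\cap Ay$ is zero-dimensional at $y$. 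Hence near $(e,y)$ the fiber reduces to $\Stab_{A}(y)\times\{y\}$, of dimension $\dim A-\dim Ay$ by orbit-stabilizer. Combining the two expressions yields $\dim_{y}S_{X}=\dim_{y}X-\dim Ay=\text{codim}_{X}(Ay)$, exactly the required identity in the four specializations.

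The main delicate step is the transversality of $N_{\mathfrak{a}}+y$ and $Ay$ at $y$: it implicitly requires $T_{y}(Ay)=[\mathfrak{a},y]$, equivalently separability of the orbit map $A\to Ay$ at $e$. For $A=G=\GL(V)$ and $A=G'=\GL(W)\times\GL(V')$ this is automatic; for $A=H$ and $A=H'$ in characteristic $p>2$ it follows from the well-known separability of the adjoint orbit maps for nilpotent elements of classical Lie algebras of types $B$, $C$, $D$ in good characteristic, which we take for granted throughout.
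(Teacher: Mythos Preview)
Your argument is correct and follows essentially the same route as the paper's own proof: surjectivity of the differential of the action map at $(e,y)$ gives smoothness via \cite[Proposition~10.4 in Ch.~III]{Har}, base change along $X\hookrightarrow\mathfrak{a}$ yields the cross section, and the codimension identity is obtained by computing the fiber dimension of the restricted action map using separability of the orbit map in good characteristic. The only cosmetic difference is that the paper extracts the codimension formula via a second base-change square over an open piece $\widehat{O}\subset Ay$ of the orbit, whereas you compute $\bar\mu^{-1}(y)$ directly; both amount to the same relative-dimension bookkeeping.
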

\begin{proof} We only need to prove the claim in the case of $S_{0}'$ at $y$. The remaining things are the same. In fact we already have $\mathfrak{h'}=[\mathfrak{h'},y]\oplus N_{0}'$ with $N_{0}':=N\cap\mathfrak{h}'$. Consider the following commutative diagram
$$
\xymatrix{
 \mbox{Ad}^{-1}(\overline{H'x})=H' \times S_{0}' \ar[d]_{\text{Ad}'} \ar[r]^{\;\quad j'}
                & H' \times \{ N_{0}'+y \} \ar[d]^{\text{Ad}}  \\
   \overline{H'x} \ar[r]_{j}
                &    \mathfrak{h'}      }$$
where $\mbox{Ad}$, $\mbox{Ad}'$ are adjoint actions, and $j'$ and $j$ are the inclusion maps.
 It is easy to know by a direct calculation that the image  of the tangent map of $\mbox{Ad}$ at $(e,y)$ is
$[\mathfrak{h'},y]\oplus N_{0}'$ which is equal to $\mathfrak{h'}$. So $\mbox{Ad}$ is smooth at $(e,y)$. Since the base change preserves smoothness ,  $\mbox{Ad}'$ is smooth at $(e,y)$. So the first part of the lemma follows.

 As $\mbox{char}(\mathbb{K})=p> 2$ is good for $G(V')$ and $G(W)$, it is also good for $H'=G(V')\times G(W)$. So we have $\mathcal{T}_{y}(H' y)=[\mathfrak{h'},y]$, thereby  $\mbox{dim}(H' y)=\mbox{dim}([\mathfrak{h'},y])$.
Combining  with the fact that $\mathcal{T}_{y}(H' y) \bigoplus \mathcal{T}_{y}(y+N_{0}') =[\mathfrak{h'},y]\oplus N_{0}'=\mathfrak{h'}$, we have that $N_{0}'$ and $H' y$ intersect only at $y$ in  a neighborhood $\widehat{O}$ of $y$ in $H' y$, thereby so do $S'$ and $H' y$.  In this neighborhood,  $(\mbox{Ad}')^{-1}(\widehat{O})=\widehat{H'}\times y $ where $\widehat{H'}$ is an open subset of $H' $ such that $\widehat{H'}=\tau^{-1}(\widehat{O})$ with $\tau$ being an orbit map of $y$  under the adjoint  action of $H'$.
The second part of the lemma follows from the following base change
$$\xymatrix{
 \widehat{ H'} \times y \ar[d]_{\Ad''} \ar[r]^{j}
                & H' \times S_{0}' \ar[d]^{\Ad'} \\
  \widehat{O} \ar[r]_{j}
              &         \overline{H'x},     }$$
              and the fact that base change of smooth morphism preserves  the relative dimension (\cite[Proposition 10.1 in Ch.III]{Har}).
 \end{proof}

The foregoing lemma is a modular version of the arguments \cite[12.4]{KP2}. It is still valid  in our situation, essentially due to the separable action of groups (see \cite[17.11.1]{EGA} for general arguments). This is one of the reasons why we  requires  $\mbox{char}(\mathbb{K})=p$ is good.

\subsection{} We are in the position to present the canceling-columns proposition.

\begin{prop} \label{prop3.8} (Induction by canceling columns)
Assume that $\varepsilon$-degeneration $\sigma \leq \eta$ arises from the $\varepsilon'$-degeneration $\sigma' \leq \eta'$ by adding
columns.
Then
$$\text{Sing}(\overline{\mathcal{O}_{\varepsilon,\eta}},\mathcal{O}_{\varepsilon,\sigma})
=\text{Sing}(\overline{\mathcal{O}_{\varepsilon',\eta'}},\mathcal{O}_{\varepsilon',\sigma'}).$$
\end{prop}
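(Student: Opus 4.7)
The plan is to adapt the Kraft--Procesi double-fibration argument (compare \cite[Proposition 13.5]{KP2}) and transport singularities between $(\overline{\mathcal{O}_{\varepsilon,\eta}},y)$ and $(\overline{\mathcal{O}_{\varepsilon',\eta'}},y')$ through a common smooth covering supplied by a subvariety of $L(V,U)$. By induction on the number of erased columns it suffices to treat $s=1$, so $\varepsilon'=-\varepsilon$ and $\eta',\sigma'$ are obtained from $\eta,\sigma$ by erasing the first column (of common length $n-m$, where $n=|\eta|$ and $m=|\eta'|$). Take quadratic spaces $V$ of type $\varepsilon$ with $\dim V=n$ and $U$ of type $-\varepsilon$ with $\dim U=m$, and work with the canonical maps $\pi,\rho$ from \S\ref{canonicalmaps}. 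Since $\sigma$ and $\eta$ share the first column, every element of $\mathcal{O}_{\varepsilon,\sigma}$ has rank $m$, hence $\mathcal{O}_{\varepsilon,\sigma}\subseteq \rho(L')$ by Proposition \ref{prop2.7}(2); a symmetric count, using that the second column of $\sigma$ is at most the first column of $\eta$, shows $\mathcal{O}_{-\varepsilon,\sigma'}\subseteq \pi(L')$ by Proposition \ref{prop2.7}(1).

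Next I set $W:=\pi^{-1}(\overline{\mathcal{O}_{-\varepsilon,\eta'}})\cap L'$, which is $G(U)\times G(V)$-stable and locally closed in $L'$. Combining Lemma \ref{lemma2.6} with the rank-image descriptions in Proposition \ref{prop2.7} yields
$$\rho(W)=\overline{\mathcal{O}_{\varepsilon,\eta}}\cap\rho(L'), \qquad \pi(W)=\overline{\mathcal{O}_{-\varepsilon,\eta'}}\cap\pi(L').$$
Each of these images is open in the relevant orbit closure: for $\rho(W)$ this is because rank is at most $m$ throughout $\overline{\mathcal{O}_{\varepsilon,\eta}}$, so the locally closed condition $\{\mbox{rank}=m\}$ coincides there with the open condition $\{\mbox{rank}\geq m\}$; for $\pi(W)$ it is immediate since $\{\mbox{rank}\geq 2m-n\}$ is already open in $\mathfrak{g}(U)$. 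Pick $y\in\mathcal{O}_{\varepsilon,\sigma}$, choose $X\in W$ with $\rho(X)=y$, and set $y':=\pi(X)$; by Lemma \ref{lemma2.6}(4), $y'\in\mathcal{O}_{-\varepsilon,\sigma'}$.

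Proposition \ref{prop2.7} supplies smoothness of $\pi|_{L'}$ and $\rho|_{L'}$, and Lemma \ref{locally closed lemma} then transfers smoothness to the restrictions $\rho|_W\colon W\to\rho(W)$ and $\pi|_W\colon W\to\pi(W)$. Applying Definition \ref{def3.1} with $Z=W$, $z=X$, $\varphi=\rho|_W$, $\psi=\pi|_W$, together with the openness of $\rho(W)$ in $\overline{\mathcal{O}_{\varepsilon,\eta}}$ and of $\pi(W)$ in $\overline{\mathcal{O}_{-\varepsilon,\eta'}}$, gives
$$\mbox{Sing}(\overline{\mathcal{O}_{\varepsilon,\eta}},y)=\mbox{Sing}(\rho(W),y)=\mbox{Sing}(W,X)=\mbox{Sing}(\pi(W),y')=\mbox{Sing}(\overline{\mathcal{O}_{-\varepsilon,\eta'}},y'),$$
which is the proposition for $s=1$; iterating in $s$ recovers the general statement. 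The step I expect to be the main obstacle is the verification of the two image identities $\rho(W)=\overline{\mathcal{O}_{\varepsilon,\eta}}\cap\rho(L')$ and $\pi(W)=\overline{\mathcal{O}_{-\varepsilon,\eta'}}\cap\pi(L')$, together with the numerical inclusion $y'\in\pi(L')$. These require careful bookkeeping of the Jordan-type correspondence $\mu\leftrightarrow\mu'$ (erasing the first column) for all degenerations $\mu\leq\eta$, which is controlled by the surjectivity of $X\in L'$ and the basic fact that $X^{*}X$ and $XX^{*}$ differ in Jordan type exactly by the first column.
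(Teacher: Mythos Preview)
Your argument is correct and follows essentially the same route as the paper: reduce to a single column, set $W=N_{\varepsilon,\eta}\cap L'=\pi^{-1}(\overline{\mathcal{O}_{-\varepsilon,\eta'}})\cap L'$, establish smoothness of $\pi|_W$ and $\rho|_W$ via Proposition~\ref{prop2.7} and Lemma~\ref{locally closed lemma}, and use Lemma~\ref{lemma2.6}(4) to locate the point $X$. One small remark: Lemma~\ref{locally closed lemma} is stated only for $\rho$, so for $\pi|_W$ you should instead invoke the direct base-change argument (as in the paper's step~(1)), using that $W=\pi|_{L'}^{-1}\bigl(\overline{\mathcal{O}_{-\varepsilon,\eta'}}\cap\pi(L')\bigr)$; your explicit verification that $\rho(W)$ and $\pi(W)$ are open in the respective orbit closures is a welcome clarification that the paper leaves implicit.
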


 We will prove the above proposition  by the same arguments as in the  proof of \cite[Proposition 13.5]{KP2}. For this, we need to check some details in the case of positive-characteristic fields.

\begin{proof}  It is enough to treat the case when  the $\varepsilon$-degeneration $\sigma \leq \eta$ arises  from the $\varepsilon'$-degeneration $\sigma' \leq \eta'$ by adding a single column. Let $V$ be a quadratic space of type $\varepsilon$ and of  dimension $|\eta|$, $U$ be a quadratic space of type $\ve'$ and of dimension $|\eta'|$. By Lemma \ref{lemma2.6}, we consider the following restrictions of  map $\pi$ and $\rho$ on the  locally closed set
$$
\xymatrix{
 N_{\varepsilon,\eta}:= \pi^{-1}(\overline{\mathcal{O}_{-\varepsilon,\eta^{'}}}) \ar[d]_{\rho} \ar[r]^{\ \ \ \ \  \ \ \ \pi} &   \overline{\mathcal{O}_{-\varepsilon,\eta^{'}}}   \\
\overline{\mathcal{O}_{\varepsilon,\eta}}. }$$
Here $\pi|_{N_{\varepsilon,\eta}}$ ({\sl{resp.}} $\rho|_{N_{\varepsilon,\eta}}$)  is simply written as $\pi$ ({\sl{resp.}} $\rho$).
 We claim that there exists  $x \in N_{\varepsilon,\eta} $ such that $\pi(x)\in \mathcal{O}_{-\varepsilon,\sigma'}$, $\rho(x)\in \mathcal{O}_{-\varepsilon,\sigma}$ and that $x, \pi,\rho $ satisfy the assumption of Definition \ref{def3.1}. The arguments will proceed by steps.

(1) We assert that the map $\pi$ is smooth on the open subset  $$ N_{\ve,\eta}':=N_{\varepsilon,\eta}\cap L'$$
of $ N_{\varepsilon,\eta}$. Indeed,  $\pi (N_{\ve,\eta}')$ is open subset of
$\overline{\mathcal{O}_{-\varepsilon,\eta^{'}}}$ (note that  $\pi (N_{\ve,\eta}')=\pi (L')\cap \overline{\mathcal{O}_{-\varepsilon,\eta^{'}}}$ and $\pi (L')$ is open subset of $\mathfrak{g}(U)$ by Proposition \ref{prop2.7}(1)) and $\pi^{-1}(\pi (N_{\ve,\eta}')) \cap L'= N_{\ve,\eta}'$. Now we have base changes as below:
$$\xymatrix{
  N_{\ve,\eta}'  \ar[d]_{\pi}   \ar[r]^{j}
       &  \ar[d]^{\pi}   L^{'} \\
    \ar[r]_{ \ \ \ \ \ j}
       \pi(N_{\ve,\eta}')  &    \pi( L^{'})        }$$
 where $j$'s  are  inclusions of locally closed subvarieties. As a base change preserves smoothness (cf. \cite[Proposition 10.1 in Ch.III]{Har}) or \cite[(4.9-4.10)]{GW}), we have $\pi$ is smooth on $N_{\ve,\eta}'$ by Proposition \ref{prop2.7}.

(2)  $N_{\ve,\eta}'$ is locally closed $G(U)$-stable subset of $L'$, by lemma \ref{locally closed lemma} we have $\rho|_{N_{\ve,\eta}'}$ is smooth. In another words,  $\rho$ is smooth on  $N_{\ve,\eta}'$ .

(3) By Lemma \ref{lemma2.6}(4), there exists $x \in  N_{\varepsilon,\eta}'$  such that $\pi(x)\in \mathcal{O}_{-\varepsilon,\sigma'}$  and    $\rho(x) \in \mathcal{O}_{\varepsilon,\sigma}$.

Summing up, we complete the proof.
 \end{proof}

\section{Proof of the main result}
\subsection{Proof of Theorem \ref{mainthm}} 
Owing to Theorem \ref{proposition5.2}, we only need to consider all  $\mathcal{O}_{\varepsilon,\sigma}$ of codimention 2 in $\overline{\mathcal{O_{\varepsilon,\eta}}}$, which is  of course a minimal degeneration. By canceling  ``rows" and ``columns" we obtain a minimal irreducible $\varepsilon'$-degeneration $\sigma'\leq \eta'$ (see \cite[\S3.4]{KP2}), which is among types $a,b,c,e$ in Table \ref{table}.
By Lemmas \ref{lemma2.2} and  \ref{lemma2.3}, the normality of  $\mathcal{O}_{\varepsilon',\sigma'}$ at  $ \overline{\mathcal{O_{\varepsilon',\eta'}}}$ is known.
From Theorem \ref{3.3} and lemma \ref{smooth lemma} we can clearly determine as the theorem says, the normality of $\overline{\mathcal{O_{\varepsilon,\eta}}}$ at $\mathcal{O}_{\varepsilon,\sigma}$. The proof is completed.
\begin{remark}
(1) The reason why we exclude type $d$  is that we are not able to prove Proposition 10.2 of \cite{KP2} in positive characteristic. The important tool in \cite{KP2} is Grauert-Riemenschneider vanishing theorem which does not hold ever in positive characteristic. Thus we are not able to obtain the modular version of Proposition 15.4 of \cite{KP2}.

(2) By the arguments in the above proof, one easily has the following observation. If the normality of $\overline{\mathcal{O_{\varepsilon,\eta}}}$ at  $\mathcal{O}_{\varepsilon,\sigma}$ is in type $e$ is known, then we can determine normality of all nilpotent orbits of the orthogonal and symplectic groups. The issue of this question in type $e$  is under investigation.

(3) In comparison with J. Thomsen's results on the normality of nilpotent orbit closures in positive characteristic (cf. \cite{Tho}), our results are more extensively applicable. This is easily seen by some plain examples listed in \S\ref{example} (1) and (2) below.
\end{remark}
\subsection{Examples} \label{example} In the concluding subsection with the same notations and assumptions as in the main theorem,  we illustrate our main result by some examples.
\begin{itemize}
\item[(1)] Let $\mathfrak{g}=\mathfrak{so}_{11}$  and $\eta=[7,2,2])$. Then the nilpotent variety $\overline{\mathcal{O}_{1,\eta}}$ is not normal because
it has codimension 2 degeneration $\mathcal{O}_{1,\sigma}$ with $\sigma=[7,1,1,1,1]$. By erasing first row,  the pair $(\eta,\sigma)$ corresponds to the minimal irreducible degeneration
$([2,2],[1,1,1,1])$ (by erasing first row  ) of type $e$ in Table \ref{table}.
\item[(2)] Let $\mathfrak{g}=\mathfrak{sp}_{8}$ and $\eta=[6,1,1]$. Then the nilpotent variety $\overline{\mathcal{O}_{-1,\eta}}$ is  normal.
$\overline{\mathcal{O}_{-1,\eta}}$ has only one codimension 2 degeneration $\mathcal{O_{-1,\sigma}}$ with $\sigma=[4,2,2]$
and by erasing first column the pair $(\eta,\sigma)$corresponds to minimal irreducible degeneration([5], [3,1,1]) of type $c$ in Table \ref{table}.
\item[(3)] Let $\mathfrak{g}=\mathfrak{sp}_{14}$ and $\eta=[4,4,3,3]$. In this case, we are not able to judge if the nilpotent variety $\overline{\mathcal{O}_{-1,\eta}}$ is  normal. It has a codimension 2 degeneration  of $\mathcal{O}_{-1,\sigma}$ with $\sigma=[4,4,2,2,2]$. This case corresponds to the minimal degeneration of  type $d$ in Table \ref{table}.
\end{itemize}

\section*{Acknowledgements} We thank Hao Chang and Miaofen Chen  for their helpful discussions.
We express our deep thanks to the referee whose suggestions improve our paper. Especially, the referee pointed out some general fact for Lemma \ref{lemma2.3} (see Remark \ref{lemma2.3Rem}(1))
and  provided us with a more  strict proof of Lemma \ref{smooth lemma}.

\end{document}